\documentclass[11pt]{amsart}

\usepackage[utf8]{inputenc}
\usepackage[english]{babel}

\usepackage{url}
\usepackage{anysize}
\usepackage{amsfonts}
\usepackage{amssymb}
\usepackage{amsmath}
\usepackage{amsthm}
\usepackage{tikz-cd}
\usepackage[alphabetic,backrefs,lite]{amsrefs} % for bibliography
\usepackage[all]{xy}
\usepackage{enumerate}
\usepackage{multirow}
\usepackage{ulem}
\usepackage{geometry}
\usepackage{color}
\usepackage{colonequals}
\usepackage{mathrsfs}

\definecolor{darkgreen}{rgb}{0,0.5,0}

\usepackage[
draft=false,
colorlinks, citecolor=darkgreen,      
pdfauthor={S. Brivio, F. Fallucca, and F. F. Favale},
pdftitle={On some special subvarieties of moduli spaces of vector bundles},
backref,
linktocpage]{hyperref}

\usepackage{tcolorbox}
\newtcolorbox{mybox}{colback=blue!5!white,colframe=blue!75!black}

\newtheorem{theorem}{Theorem}[section]
\newtheorem*{theorem*}{Theorem}
\newtheorem{proposition}[theorem]{Proposition}
\newtheorem{definition}[theorem]{Definition}

\newtheorem{lemma}[theorem]{Lemma}

\newtheorem{remark}[theorem]{Remark}

\DeclareMathOperator{\codim}{codim}

\DeclareMathOperator{\Ker}{Ker}

\DeclareMathOperator{\Ext}{Ext}
\DeclareMathOperator{\Hom}{Hom}
\DeclareMathOperator{\Pic}{Pic}

\DeclareMathOperator{\rk}{rk}

\DeclareMathOperator{\End}{End}

\DeclareMathOperator{\id}{id}
\newcommand{\ev}{ev}

\DeclareMathOperator{\Gr}{Gr}
\DeclareMathOperator{\Ima}{Im}

\DeclareMathOperator{\gr}{gr}
\DeclareMathOperator{\edim}{edim}

\newcommand{\nc}{\newcommand}

\nc{\cH}{{\mathcal H}}
\nc{\cA}{{\mathcal A}}
\nc{\cG}{{\mathcal G}}
\nc{\cC}{{\mathcal C}}
\nc{\cD}{{\mathcal D}}
\nc{\cO}{{\mathcal O}}
\nc{\cI}{{\mathcal I}}
\nc{\cB}{{\mathcal B}}
\nc{\cY}{{\mathcal Y}}
\nc{\cK}{{\mathcal K}} 
\nc{\cX}{{\mathcal X}}
\nc{\cS}{{\mathcal S}}
\nc{\cE}{{\mathcal E}}
\nc{\cF}{{\mathcal F}}
\nc{\cZ}{{\mathcal Z}}
\nc{\cQ}{{\mathcal Q}}
\nc{\cN}{{\mathcal N}}
\nc{\cP}{{\mathcal P}}
\nc{\cL}{{\mathcal L}}
\nc{\cM}{{\mathcal M}}
\nc{\cT}{{\mathcal T}}
\nc{\cW}{{\mathcal W}}
\nc{\cU}{{\mathcal U}}
\nc{\cJ}{{\mathcal J}}
\nc{\cV}{{\mathcal V}}
\nc{\cR}{{\mathcal R}}
\nc{\bH}{{\mathbb H}}
\nc{\bA}{{\mathbb A}}
\nc{\bG}{{\mathbb G}}
\nc{\bC}{{\mathbb C}}
\nc{\bO}{{\mathbb O}}
\nc{\bI}{{\mathbb I}}
\nc{\bB}{{\mathbb B}}
\nc{\bY}{{\mathbb Y}}
\nc{\bK}{{\mathbb K}} 
\nc{\bX}{{\mathbb X}}
\nc{\bS}{{\mathbb S}}
\nc{\bE}{{\mathbb E}}
\nc{\bF}{{\mathbb F}}
\nc{\bZ}{{\mathbb Z}}
\nc{\bQ}{{\mathbb Q}}
\nc{\bN}{{\mathbb N}}
\nc{\bP}{{\mathbb P}}
\nc{\bL}{{\mathbb L}}
\nc{\bM}{{\mathbb M}}
\nc{\bT}{{\mathbb T}}
\nc{\bW}{{\mathbb W}}
\nc{\bU}{{\mathbb U}}
\nc{\bD}{{\mathbb D}}
\nc{\bJ}{{\mathbb J}}
\nc{\bV}{{\mathbb V}}
\nc{\bR}{{\mathbb R}}

\newcommand{\cHom}{\mathcal{H}om}

\begin{document}
	
	\title[Some rational  subvarieties of moduli spaces of stable vector bundles]
	{Some rational subvarieties of moduli spaces of stable vector bundles}
	
	\author{Sonia Brivio}
	\address{Dipartimento di Matematica e Applicazioni,
		Universit\`a degli Studi di Milano-Bicocca,
		Via Roberto Cozzi, 55,
		I-20125 Milano, Italy}
	\email{sonia.brivio@unimib.it}
	
	\author{Federico Fallucca}
	\address{Dipartimento di Matematica,
		Universit\`a degli Studi di Trento,
		via Sommarive, 14,
		I-38123 Trento, Italy}
	\email{federico.fallucca@unitn.it}
	\email{fallucca@altamatematica.it}
	
	\author{Filippo F. Favale}
	\address{Dipartimento di Matematica,
		Universit\`a degli Studi di Pavia,
		via Ferrata, 5
		I-27100 Pavia, Italy}
	\email{filippo.favale@unipv.it}
	
	%\date{\today}
	\thanks{\\
    \textit{2020 Mathematics Subject Classification}: Primary: 14J60, Secondary: 14F06,14D20, 14J42\\
		\textit{Keywords}: Vector bundles,  stability,  moduli spaces, symplectic varieties, \\
        %\noindent 
        {\bf Acknowledgements}:
        The authors are partially supported by INdAM-GNSAGA. The second author held a research grant from INdAM, Istituto Nazionale di Alta Matematica.
}

\begin{abstract}
Let $X$ be a smooth complex  irreducible projective variety  of dimension  $n \geq 2$ and $H$  be an ample line bundle on $X$. In this paper, we construct families of $\mu_H$-stable vector bundles on $X$ having fixed determinant and rank $r$, which are generated by $r+1$ global sections, parametrized by Grassmanian varieties.  This gives into the corresponding moduli spaces  special subvarieties birational to Grassmannian. 
\end{abstract}
	
\maketitle

%-------------------------------------------------------
%-------------------------------------------------------
%-------------------------------------------------------
	
\section*{Introduction}

The notion of $\mu$-stability for vector bundles on curves was introduced by Mumford, and subsequently extended to higher-dimensional varieties by the foundational works of Takemoto, Gieseker and Maruyama. In particular, Maruyama proved the existence of coarse moduli spaces parametrising isomorphism classes of $\mu_H$-stable vector bundles with respect to an ample polarisation $H$, on a smooth projective variety (see \cite{Mar77}). 
\smallskip

While the case of curves is nowadays well understood, the situation in higher dimension remains considerably less developed. In particular, there are no general results ensuring the non-emptyness of these moduli spaces. For this reason, explicit constructions of families of $\mu$-stable vector bundles dominating particular subvarieties of these moduli spaces seem to be of significant interest.
\smallskip

Let $X$ be a smooth complex irreducible projective variety of dimension $n \geq 2$ and let $L$ be a non-trivial globally generated line bundle on $X$. In this paper, our aim is to produce families of vector bundles on $X$ with rank $r \geq 2$ and determinant $L$, which are generated by $r+1$ global sections and are $\mu_H$-stable with respect to an ample line bundle $H$ on $X$. Moreover, these families give rise to subvarieties in the corresponding moduli spaces which are birational to a Grassmannian variety.
\smallskip

Our construction starts as follows. Let $W \subset H^0(L)$ be a $(r+1)$-dimensional subspace such that the evaluation map of global sections $W \otimes {\mathcal O}_X \to L$ is a surjective map of vector bundles on $X$. Denote by $M_{W,L}$ its kernel; it is then a vector bundle on $X$ of rank $r$ and determinant $L^{-1}$.
Its dual is a vector bundle $E_W$ too, with rank $r$, determinant $L$, and Chern classes $\underline{c}=(c_1(L),\dots, c_1(L)^n)$ (see Lemma \ref{lemmatecnico2}), which fit into the following exact sequence:
$$ 0 \to L^{-1} \to W \otimes {\mathcal O}_X \to E_W \to 0.$$    
If  $M_{W,L}$ is $\mu_H$ semistable for an ample line bundle $H$ on $X$, then so is $E_W$  and it is generated by $r+1$ global sections.

Vector bundles of the form $M_{W,F}$ (denoted as $M_F$ in the complete case  $W= H^0(F)$), arising as kernels of evaluation map of globally generated vector bundles $F$, on a smooth variety, are known in literature as  {\it kernel bundles, dual span bundles and sygyzy bundles}. Their stability has been extensively studied. 
For  a smooth curve of genus $g \geq 2$, the theory is well developed at least for the complete case (see, for example, the results in \cite {But94}, \cite{Mis08}, \cite{EL92}, \cite{CH25}, \cite{BBN08}); there are also  some results in the case of singular curves (see for example \cite{BF20}). In higher dimension,  only partial results are available, mainly in  the complete case and for line bundles (see \cite{Fle84} 
and    \cite{ELM13} and \cite{Cam12}).

Our strategy for  proving the  stability of $M_{W,L}$ consists in reducing the problem  to the stability of kernel bundles on smooth curves. More precisely, let $H$ be an ample line bundle on $X$ and assume that there exists a smooth curve $C \subset X$ of genus $g \geq 2$, given as   a complete intersection of divisors of $\vert H \vert$,  such that the restriction map of global section $H^0(X,L) \to H^0(C, L_{\vert C})$ is surjective.  We can prove that the restriction of $M_{W,L}$ to $C$ is a kernel bundle on $C$ and its   stability implies  $\mu_H$-stability of $M_{W,L}$.
Stability  on the curve $C$ is ensured  by requiring suitable numerical  assumptions on the degree of $L_{\vert C}$. Specifically, our result holds whenever either our conditions or those established in \cite{Mis08} are satisfied.

We will say that the data $(X,L,H,r)$  is {\it admissible} if the above mentioned assumptions are satisfied (c.f. Definition \ref{DEF:AdmissibleTriple}).  
We denote by ${\mathcal M}_H^s(r,L, \underline{c})$ the moduli space parametrizing  $\mu_H$-stable vector bundles with rank $r$, determinant $L$, and Chern classes $\underline{c}$ depending on $L$ (c.f. Definition \ref{DEF:definizione di U}). 
Our main result is the following (see Theorem \ref{thm:inj_mor_moduli}):
 \begin{theorem*} 
 Let $(X,L,H,r)$ an admissible collection, then the moduli space ${\mathcal M}_H^s(r,L, \underline{c})$ is non-empty and it contains a subvariety birational to the Grassmannian variety $\Gr(r+1,H^0(L))$.
\end{theorem*}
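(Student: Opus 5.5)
Here is the plan I would follow. Let $U \subset \Gr(r+1,H^0(L))$ be the set of subspaces $W$ whose evaluation map $W\otimes\cO_X \to L$ is surjective. The first step is to show that $U$ is a nonempty Zariski open subset. Openness holds because failure of surjectivity at some point of $X$ is a closed condition, and $X$ is projective, so the projection of the incidence correspondence is closed. Nonemptiness holds because $L$ is globally generated and $r+1 \geq n+1$ will be part of the admissibility hypotheses. I expect admissibility to give $r\ge n$ or something equivalent. A general $(r+1)$-dimensional subspace of a base-point-free linear system is then base point free by the standard dimension count. For $W \in U$ we set $E_W = M_{W,L}^\vee$. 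This is a vector bundle of rank $r$ with determinant $L$, and by Lemma \ref{lemmatecnico2} it has Chern classes $\underline c$.

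The second step proves $\mu_H$-stability of $M_{W,L}$ for $W$ general in $U$. Take the smooth curve $C = D_1\cap\dots\cap D_{n-1}$, with $D_i\in|H|$ and genus $g\ge2$, supplied by admissibility, with $H^0(X,L)\to H^0(C,L_{|C})$ surjective. Restricting the defining sequence to $C$ shows that $M_{W,L}|_C = M_{W_C,L_C}$, where $W_C$ is the image of $W$. This image has dimension $r+1$ as soon as $W$ meets the kernel of the restriction trivially, which is again an open condition. The numerical hypotheses on $\deg L_{|C}$ (ours, or Mistretta's \cite{Mis08}) then give stability of $M_{W_C,L_C}$ for general $W_C$. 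Since the restriction map is surjective, a general $W$ maps to a general $W_C$. Finally, stability of the restriction to a complete intersection curve of $|H|$ implies $\mu_H$-stability. Indeed, a destabilizing reflexive subsheaf $F\subset M_{W,L}$ restricts, for $C$ general in the family, to a subbundle of $M_{W,L}|_C$ with slope $(c_1(F)\cdot H^{n-1})/\rk F$. This reduces stability on $X$ to stability on the curve. Stability is open in families, so we obtain a nonempty open $U'\subset U$ on which $E_W$ is $\mu_H$-stable. In particular $\cM_H^s(r,L,\underline c)\neq\emptyset$.

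The third step builds the morphism $\phi\colon U'\to \cM_H^s(r,L,\underline c)$, $W\mapsto [E_W]$. The bundles $E_W$ form a flat family over $U'$: take the dual of the kernel of $\mathcal S\boxtimes\cO_X \to p_X^*L$, where $\mathcal S$ is the tautological subbundle. The coarse moduli property then gives $\phi$. For injectivity, recover $W$ from $E_W$. Dualizing $0\to L^{-1}\to W\otimes\cO_X\to E_W\to0$ gives $0\to M_{W,L}\to W^\vee\otimes\cO_X\to L\to 0$, so it suffices to show that the natural map $W^\vee \to H^0(E_W)$ is an isomorphism. Then $E_W$ determines $W^\vee$ as its space of sections together with the extension class, and hence $W$. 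Equivalently, we need $H^0(M_{W,L})=0$ and $H^1(L^{-1})=0$, or at least the injectivity of $H^1(L^{-1})\to W\otimes H^1(\cO_X)$. The vanishing $H^0(M_{W,L})=0$ follows from stability, since $M_{W,L}$ has negative slope. More intrinsically, $W^\vee$ is recovered as $H^0(E_W)$, and $W$ as the image of $H^0(E_W)^\vee$ in $H^0(\det E_W)=H^0(L)$ via $\wedge^r$.

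The main obstacle is this injectivity, together with birationality onto the image. I would first try to avoid any cohomological vanishing on $X$. Instead, restrict to $C$ and use that $H^0(C,E_W|_C)\supseteq W^\vee$ recovers $W_C$, which is the known curve case. Then $W$ is determined by $W_C$, because the restriction map is injective on $W$ for $W$ in $U'$. With $\phi$ injective on a nonempty open set, in characteristic zero $\phi$ is birational onto the closure of its image. That closure is the required subvariety birational to $\Gr(r+1,H^0(L))$.
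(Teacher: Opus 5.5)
Your first three steps (base-point-freeness of a general $W$, stability via restriction to $C$, the family over $U$ giving a morphism) follow the paper. The injectivity step, however, has a genuine gap. The route you favour is to recover $\rho(W)$ from $E_W|_C\simeq E_{\rho(W)}$ and then lift, and it cannot work. Injectivity of $\rho|_W$ only gives $W\simeq\rho(W)$; it does not make $W$ the unique lift of $\rho(W)$. Indeed $\rho^{-1}(\rho(W))=W\oplus\Ker\rho$, and every graph $\{w+\varphi(w)\mid w\in W\}$ with $\varphi\in\Hom(W,\Ker\rho)$ has the same image. So the fibres of $R_C$ have dimension $(r+1)\dim\Ker\rho$, which is positive as soon as $\Ker\rho\neq 0$. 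For $n=2$ one has $\Ker\rho=H^0(L\otimes H^{-1})$, which is nonzero in all the surface examples. Knowing that $[E_W]$ determines $\rho(W)$ would therefore only give $\dim\overline{\Phi(U)}\geq\dim\Gr(r+1,H^0(L|_C))$, not birationality with $\Gr(r+1,H^0(L))$ (compare Remark~\ref{REM:bound}). Restricting to $C$ discards exactly the sections of $E_W$ vanishing along $C$. Two candidate subspaces $W^\vee,W'^\vee\subset H^0(E_W)$ could a priori differ by such sections, and only $h^0(E_W)=r+1$ excludes this. Your ``known curve case'' is also not automatic: recovering $\rho(W)$ from $E_{\rho(W)}$ this way needs $h^0(E_{\rho(W)})=r+1$, which the paper checks only under $A_3(1)$.

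The missing ingredient is a vanishing on $X$ itself, and the hypotheses supply it. By $A_2$, $L$ is big and nef, and $n\geq 2$, so Kawamata--Viehweg gives $H^1(X,L^{-1})=0$. Hence $W^\vee\simeq H^0(E_W)$ (Lemma~\ref{lemmatecnico2}(b)). Then, as in Proposition~\ref{prop: injectivitydE}, $\mu_H$-stability of $E_W$ together with $\mu_H(E_W)>0$ gives $H^0(E_W^\vee)=0$. So the determinant map $\wedge^rH^0(E_W)\to H^0(\det E_W)=H^0(L)$ is injective with image $W$ (Lemma~\ref{lemmatecnico2}(c)). Thus $W$ is recovered intrinsically from the isomorphism class of $E_W$, and $\Phi|_U$ is injective. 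You did isolate the right condition, namely $W^\vee\to H^0(E_W)$ surjective, i.e.\ $H^1(L^{-1})\to W^\vee\otimes H^1(\cO_X)$ injective. You even wrote down the intrinsic recovery via $\wedge^r$, but then proposed to avoid proving it, and it cannot be avoided.

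A smaller point concerns the stability step. There you restrict a destabilizing subsheaf to a \emph{general} complete-intersection curve, whereas stability of $E_W|_C$ was obtained for a fixed $C$ and general $W$. You can either invoke openness of stability as $C$ varies, or work on the fixed $C$ with the saturation of the image of $F|_C$, whose degree is at least $c_1(F)\cdot H^{n-1}$.
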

This provides, in arbitrary dimension, a systematic method to construct globally generated $\mu_H$-stable vector bundles with prescribed determinant and Chern classes. 
\smallskip

In the second part of the paper, we specialise to algebraic surfaces, and we investigate the scope of our construction through a series of examples. We exhibit admissible collections with surfaces for each  Kodaira  dimensions $\kappa(S)\in \{-\infty, 0,1,2\}$.
Of particular interest is the case of K3 surfaces. Indeed,  when $S$ is a K3 surface and $H$ is an ample primitive line bundle on $S$, the subvariety arising from our construction turns out to be a Lagrangian subvariety of the moduli space, provided the latter is a smooth irreducible symplectic variety (see Theorem \ref{THm:lagrangian} and Remark \ref{REM:Lagrangian}).

%-------------------------------------------------------
%-------------------------------------------------------
	
	\section{Notations and preliminary results}
	\label{SEC:preliminary results}
	\subsection{Moduli spaces of stable sheaves}
	Let  $X$ be a smooth irreducible projective complex variety of dimension $n \geq 2$ and $H$ an ample line bundle on $X$. We will need to deal with moduli spaces parametrising ($H$-stable) vector bundles on $X$. In this section, we recall some well-known results on this topic. Our main reference is \cite{HL}. To begin with, we recall that - unlike in the case of curves - obtaining a projective moduli space requires us to include torsion-free sheaves on $X$.
    \smallskip

    Let $E$  be a non-trivial torsion-free sheaf on $X$. There exist a non empty open subset $ U \subseteq X$ such that $E|_{U}$ is  a vector bundle. Then $\rk(E)$ is defined as the rank of $E|_{U}$.  When the pair $(X,H)$ is fixed, one can define the $\mu_H$-semistability and $H$-semistability through the $H$-slope $\mu_H$ of $E$ and its reduced Hilbert polynomial, respectively. We recall that the $\mu_H$-slope is $$\mu_H(E)= \frac{c_1(E) \cdot H^{n-1}}{\rk(E)}$$ 
	whereas the reduced Hilbert polynomial is, up to a positive constant which depends only on the pair $(X,H)$, 
	$$p_H(E,k)= \frac{\chi(E \otimes H^{\otimes k })}{\rk(E)}.$$
	A torsion-free sheaf $E$ is called $\mu_H$-{\it semistable} if for any non zero subsheaf $F \subset E$  with $\rk(F) < \rk(E)$ we have $\mu_H(F) \leq \mu_H(E)$, it is said $\mu_H$-stable it the strict inequality holds.

    The sheaf $E$ is said $H$-semistable if for any non-zero subsheaf $F \subset E$ we have $p_H(F,k) \leq p_H(E,k)$ for $k>>0$ and it is said $H$-stable if the strict inequality holds for any proper subsheaf $F$. One has the following chain of implications:

    \begin{center}
    $E$ is $\mu_H$-stable  $\implies$  $E$ is $H$-stable $\implies$  $E$ is  $H$-semistable  $\implies$  $E$ is  $\mu_H$-semistable.
	\end{center}

    Any line bundle on $X$ is $\mu_H$-stable. Taking duals and tensoring by line bundles preserve both $H$-semistability and $H$-stability. Moreover, the sum of two $\mu_H$-semistable vector bundles is $\mu_H$-semistable if and only if they have the same $H$-slope. 
    \smallskip

    Any $H$-semistable torsion free sheaf $E$ admits a Jordan-Holder fibration
	$$ (JH) \quad 0 = E_0 \subset E_1 \subset  \cdots  \subset E_l = E$$
	with $\gr(E_i)=\frac{E_i}{E_{i-1}}$ which is  $H$-stable with reduced Hilbert polynomial $p_H(E,k)$.  So one can define the graded object   $\gr(E)= \bigoplus \gr(E_i)$. Two $H$-semistable torsion-free sheaves are said $S$-equivalent if they have isomorphic graded objects.
    \smallskip

    Let $P(k) \in \mathbb Q[k]$  be  a polynomial of degree $n$, and denote by 
	$\cM_H(P)$ the moduli space parametrizing $S$-equivalence classes of $H$-semistable torsion free sheaves $E$ on $X$
	with Hilbert polinomial (with respect to the polarization $H$) given by $P_H(E)=P$. The existence of this moduli space is guaranteed, for example, by \cite[Theorem 3.4.4]{HL}.
	$\cM_H(P)$ is a projective scheme, containing as an open subscheme 
	the moduli space $\cM^s_H(P)$ parametrizing  isomorphism 
	classes of  $\mu_H$-stable  vector bundles. 
	Finally, if $\underline{c}=(c_1,c_2, \cdots, c_n)$ with $c_i\in H^{2i}(X,\bZ)$, $\cM^s_H(P)$ is a disjoint union of schemes 
	$\cM^s_H(r,\underline{c})$, where $\cM^s_H(r,\underline{c})$ is the moduli space  of $\mu_H$-stable vector bundles on $X$  of rank $r$ with Chern classes $(c_1, c_2, \cdots , c_{n})$ up to numerical equivalence (see \cite{Mar77}).
	%Maruyama, Moduli of stable sheaves; I, J. Math. Kyoto Univ., 17 (1977), 91-126.
	We recall that by Bogomolov's inequality if $E$ is a  torsion free  $\mu_H$-semistable sheaf  of rank $r$ on $X$ we have 
	\begin{equation}
    \label{EQ:Discriminant}
	\Delta_H(E) = (2rc_2(E)- (r-1)c_1(E)^2)\cdot H^{n-2} \geq 0;
	\end{equation}
	this was proved by Bogomolov \cite{Bog78} for surfaces and generalized to higher dimensional smooth projective varieties using Mumford-Mehta-Ramanathan restriction theorem \cite{MR84}. 
    
    \smallskip
	Let $L \in \Pic(X)$ be a line bundle. We denote by $\mathcal M^s_H(r,L,\underline{c})$ the moduli space of  $\mu_H$-stable vector bundles  $E$ with  $\det E = L$ and  Chern classes $c_i(E)= c_i$, $i = 2, \cdots n$. This is simply the fiber at $L$ of the morphism $\det \colon \cM^s_H(r, \underline{c}) \to \Pic (X)$ which sends $[E]$ to its determinant $\det(E)$. 
	Finally, we recall the following properties concerning the infinitesimal structure of these moduli spaces.
	Assume that  there exists $[E] \in \cM^s_H(r,L,\underline{c})$, which is   the isomorphism class of a $\mu_H$-stable vector bundle. Then
	$$T_{[E]}(\cM^s_H(r,L,\underline{c})) \simeq \Ext^1(E,E)_0,$$ 
	$$ \dim \Ext^1(E,E)_0 - \dim \Ext^2(E,E)_0 \leq 
	\dim_{[E]} \cM^s_H(r,L,\underline{c}) \leq \dim \Ext^1(E,E)_0,$$
	where 
	$\Ext^i(E,E)_0$ is the  kernel of the map  $h^i(tr) \colon \Ext^i(E,E) \to H^i(\cO_X)$ induced by the trace map 
	$tr \colon \End(E) \to \cO_X$, see \cite{HL}.
	If $\Ext^2(E,E)_0= 0$, then the moduli space is smooth at the point $[E]$.
    \smallskip
    
	In particular, if $S$ is a smooth surface and $L$ is a line bundle on $S$, then $\underline{c}$ is identified by the choice of $c_2$ so we can write $\cM^s_H(r,L,\underline{c})=\cM^s_H(r,L,c_2)$, for brevity. If $[E] $ is the isomorphism class of a $\mu_H$-stable vector bundle in $\cM^s_H(r,L,c_2)$, then 
	\begin{multline}
		\label{EQ:Edim}
		\edim(\cM^s_H(r,L,c_2)):=\dim \Ext^1(E,E)_0 - \dim \Ext^2(E,E)_0  =\\
		=2rc_2-(r-1)L^2-(r^2-1) \chi(\cO_S),
	\end{multline}
	and it is the {\it expected dimension} \cite[Def. 4.5.6]{HL} of the moduli space $\cM^s_H(r,L,c_2)$ at $[E]$.
    \smallskip

	Finally, we define the discriminant  
	\begin{equation}
	\Delta(r,L,c_2):= 2rc_2- (r-1)L^2.
	\end{equation}
	By Bogomolov's inequality the moduli space $\cM^s_H(r,L,c_2)$ is empty if $\Delta(r,L,c_2)$ is negative. If $\Delta(r,L,c_2)>>0$, the moduli space $\cM^s_H(r,L,c_2)$ is a normal, generically smooth, irreducible quasi-projective variety of the expected dimension;
   this result is due to many authors, see  \cite{MO09} for a survey.
    Moreover, when $S$ is a K3 surface, by the seminal works of Mukai (see \cite{Muk84} \cite{Muk87}), then $\cM^s_H(r,L,c_2)$, if nonempty, is a  smooth quasi-projective variety of the expected dimension  which   has a simplectic structure. 	

%-------------------------------------------------------
%-------------------------------------------------------

	\subsection{Globally generated vector bundles of rank $r$ with $r+1$ global sections } 
	Let $(X,H)$  a pair as above. Let $E$ be a vector bundle on $X$ with rank $r \geq 2 $. The {\it evaluation map} of global sections of $E$ associated to $E$ is
	\begin{equation} \ev_E \colon H^0(E) \otimes \cO_X \to E, \quad s \mapsto s(x).
	\end{equation}
	We can construct the maps
	\begin{equation} \wedge^r(\ev_E) \colon (\wedge^rH^0(E)) \otimes \cO_X \to \wedge^rE,
		\quad  s_1 \wedge s_2 \wedge \cdots \wedge s_r \to s_1(x) \wedge s_2(x) \wedge \cdots \wedge s_r(x);
	\end{equation}
	and the {\it determinant map} of $E$, namely 
	\begin{equation}
		d_E=H^0(\wedge^r \ev_E): \wedge^r H^0(E) \to H^0(\det(E)),
	\end{equation}
	i.e. the map induced by $\wedge^r(ev_E)$ on global sections.
	\smallskip
	
	We recall that $E$ is said {\it globally generated} if the  evaluation map
	$\ev_E$ is surjective. In this case, as the trivial vector bundle 
	$H^0(E) \otimes \cO_X$  is $\mu_H$-semistable, for any ample  line bundle $H$ on $X$, we obtain that $\mu_H(E) \geq 0$.
	\smallskip

    Now we assume that $E$ is globally generated and $h^0(E)= r+1$. We set $L=\det(E)$ for brevity, and we consider the exact sequence
	\begin{equation}
		\label{SES:evE}
		0 \to L^* \xrightarrow{} H^0(E) \otimes \cO_X \xrightarrow{\ev_{E}} E \to 0
	\end{equation}
	and its dual
	\begin{equation}
		\label{SES:evE_Duale}
		0 \to E^* \xrightarrow{\ev_E^*} H^0(E)^* \otimes \cO_X \xrightarrow{\gamma} L \to 0
	\end{equation}
	where $\gamma$ is the dual of the inclusion $L^*\hookrightarrow H^0(E)\otimes \cO_X$ composed via the canonical isomorphism $L\simeq (L^{*})^{*}$.
	The following is a technical result we will use in the sequel. 
	
	\begin{proposition}\label{prop: injectivitydE}
		Let $E$ be a globally generated vector bundle of rank $r\geq 2$ with $h^0(E)=r+1$. If $E$ is $\mu_H$-stable, for a ample line bundle $H$ on $X$, then 
		\begin{enumerate}[(a)]
			\item $d_E$ is injective;
			\item $\Ima(d_E)$ is equal to $\Ima(H^0(\gamma))$.
		\end{enumerate}
	\end{proposition}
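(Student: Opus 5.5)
The plan is to identify the map $\wedge^r(\ev_E)$ with $\gamma$, up to canonical isomorphisms. Then (b) becomes a tautology, and (a) reduces to the vanishing of $H^0(E^*)$, which follows from stability.

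Set $V=H^0(E)$, so $\dim V=r+1$, and use the canonical isomorphism $\wedge^r V\simeq V^*\otimes\wedge^{r+1}V$, $\omega\mapsto(v\mapsto \omega\wedge v)$. Since $\wedge^{r+1}V$ is a one-dimensional vector space, this yields a trivial bundle isomorphism $\wedge^rV\otimes\cO_X\simeq V^*\otimes\cO_X$. Under it, consider the two surjections onto line bundles:
\[
\wedge^r(\ev_E)\colon \wedge^rV\otimes\cO_X\to\wedge^rE=L,\qquad \gamma\colon V^*\otimes\cO_X\to L .
\]
I will check pointwise that they have the same kernel. At $x\in X$, let $\ell\in V$ span the fibre of $L^*\hookrightarrow V\otimes\cO_X$, so that $E_x=V/\langle\ell\rangle$. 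Then $\omega\in\wedge^rV$ maps to zero in $\wedge^rE_x$ iff $\omega\in\ell\wedge\wedge^{r-1}V$, iff $\omega\wedge\ell=0$. This says exactly that the functional $v\mapsto\omega\wedge v$ vanishes on $L^*_x$, i.e.\ that it lies in $\ker\gamma_x$ (recall that $\gamma$ is restriction of functionals to $L^*$). Two surjections from the same vector bundle onto line bundles with the same kernel differ by an isomorphism of the targets. Here both targets are $L$, so they differ by an automorphism of $L$, i.e.\ a nowhere vanishing function, hence a nonzero constant on the projective variety $X$.

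Taking global sections, $d_E$ equals $H^0(\gamma)$ composed with the isomorphism $\wedge^rV\simeq V^*$, up to a nonzero scalar. This gives (b) at once and reduces (a) to the injectivity of $H^0(\gamma)$. By the long exact sequence of \eqref{SES:evE_Duale}, $\ker H^0(\gamma)=H^0(E^*)$.

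The real content is the vanishing $H^0(E^*)=0$, which I prove by contradiction. Suppose $s\in H^0(E^*)$ is nonzero. It gives a nonzero morphism $E\to\cO_X$ whose image is a torsion-free rank one subsheaf $I\subseteq\cO_X$, so $\mu_H(I)\le 0$. Since $I$ is a quotient of $E$ of rank $1<r$, the kernel has rank $r-1<r$. By $\mu_H$-stability of $E$ the kernel has slope $<\mu_H(E)$, and hence the quotient has $\mu_H(I)>\mu_H(E)$. On the other hand $E$ is globally generated, so $\mu_H(E)\ge0$. Combining, $0\le\mu_H(E)<\mu_H(I)\le0$, a contradiction. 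Hence $H^0(E^*)=0$, $H^0(\gamma)$ is injective, and therefore $d_E$ is injective.

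The step needing the most care is the pointwise identification of the kernels, including the sign and scalar conventions. Only the equality of kernels is needed, however, so these conventions do not affect the conclusion.
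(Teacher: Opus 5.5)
Your proof is correct and follows essentially the paper's route. Like the paper, you identify $\wedge^r\ev_E$ with $\gamma\circ\eta'$ by comparing kernels fibrewise, note that the two maps differ by an automorphism of $L$ (hence a nonzero scalar), and use $H^0(E^*)=0$ from stability. The only differences are that you derive (a) from this identification via \eqref{SES:evE_Duale} rather than from the paper's isomorphism $\Ker(\wedge^r\ev_E)\simeq L^{-1}\otimes\wedge^{r-1}E\simeq E^*$, and that you write out the slope argument for $H^0(E^*)=0$, which the paper only asserts.
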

	
	\begin{proof}
		(a) As the  sequence \eqref{SES:evE} is an exact sequence of vector bundles, we have an induced sequence
		\begin{equation}
			\label{SES:wedger_evE}
			0 \to \ker(\wedge^r\ev_E) \xrightarrow{\,} \bigwedge^r H^0(E) \otimes \cO_X \xrightarrow{\wedge^r\ev_E} L \to 0.
		\end{equation}
		and a canonical isomorphism $$\ker(\wedge^r\ev_E)\simeq L^{-1}\otimes \bigwedge^{r-1}E \simeq E^*$$
        which follows from the isomorphism $\bigwedge^r E=\det(E)=L$ (see \cite[Chapter II.5]{Har77}). 
        Since $E$ is $\mu_H-$stable and $\mu_H(E) \geq 0$, one can prove that $\Hom(E,\mathcal O_X) \simeq H^0(E^*)= 0$.
		So  we can conclude that the map  induced  in cohomology $$d_E=H^0(\wedge^r\ev_E) : \bigwedge^r H^0(E) \to H^0(L)$$   is injective.
	\medskip
	
	(b) Being $h^0(E)=r+1$, we have the canonical isomorphism 
	\begin{equation}
		\eta:\wedge^rH^0(E)\to H^0(E)^*\qquad \omega\mapsto \{s\mapsto \omega\wedge s\} 
	\end{equation}
	and then an isomorphism $\eta'=\eta\otimes \id_{\cO_X}: \bigwedge^r H^0(E)\otimes \cO_X \to H^0(E)^*\otimes \cO_X$.
	Consider the following exact sequences:
	\begin{equation}
		\label{SES:partial_diag}
		\begin{tikzcd}
			0 & {\Ker(\wedge^r\ev_E)} & {\bigwedge^rH^0(E)\otimes \mathcal O_X} & L & 0 \\
			0 & {\Ker(\gamma)} & {H^0(E)^*\otimes \mathcal O_X} & L & 0
			\arrow[from=1-1, to=1-2]
			\arrow[from=1-2, to=1-3]
			%\arrow["{{{\iota_1}}}"', hook, from=1-2, to=2-2]
			\arrow["{{\wedge^r\ev_E}}", from=1-3, to=1-4]
			\arrow["{\eta'}"', hook, from=1-3, to=2-3]
			\arrow[from=1-4, to=1-5]
			%\arrow["{{{\iota_3}}}"', hook, from=1-4, to=2-4]
			\arrow[from=2-1, to=2-2]
			\arrow[from=2-2, to=2-3]
			\arrow["{\gamma}", from=2-3, to=2-4]
			\arrow[from=2-4, to=2-5]
		\end{tikzcd}
	\end{equation}
	
	We claim that $\eta'(\Ker(\wedge^r\ev_E))=\Ker(\gamma)$. Recall that, for all $x\in X$, one has 
	$$L^*_x\simeq \Ker(\ev_E)_x=H^0(E\otimes \cI_x)\otimes \cO_x = \langle \tau_x \rangle\otimes \cO_x,$$
	by the short exact sequence \eqref{SES:evE}. Under our assumptions we have
	$$\Ker(\wedge^r\ev_E)_x=\{ \omega \in \wedge^r H^0(E)\,|\, \omega\wedge \tau_x=0 \}\otimes \cO_x.$$
	On the other hand, one has
	$$\gamma_x:H^0(E)^*\otimes \cO_x\to L_x$$
	is the map induced by the restriction of forms on $H^0(E)$ to $H^0(E\otimes \cI_x)$.
	Then 
	$$\Ker(\gamma)_x=\{\varphi\in H^0(E)^* \,|\, H^0(E\otimes \cI_x)\subseteq \Ker(\varphi)\}\otimes \cO_x.$$
	
	Notice that if $\omega\in \wedge^rH^0(E)$, then $\tau_x\in \Ker(\eta(\omega))\Longleftrightarrow \omega\wedge \tau_x=0$
	so 
	$$\eta'(\Ker(\wedge^r\ev_E))=\Ker(\gamma)$$
	as claimed. Then there exists $\alpha:L\to L$ which makes commutative the diagram on the right in \eqref{SES:partial_diag}. Actually, being $\eta'$ an isomorphism, by Snake Lemma, $\alpha$ is an isomorphism too. Since $L$ is a line bundle, this is an homothety.
	\medskip
	
	Finally, we have a commutative diagram
	\begin{equation}
		\begin{tikzcd}
			{\bigwedge^rH^0(E)} & {H^0(L)} \\
			{H^0(E)^*} & {H^0(L)}
			\arrow["{{d_E}}", from=1-1, to=1-2]
			\arrow["{\eta}"', hook, from=1-1, to=2-1]
			\arrow["{{{H^0(\alpha)}}}", hook, from=1-2, to=2-2]
			\arrow["{H^0(\gamma)}"', from=2-1, to=2-2]
		\end{tikzcd}
	\end{equation}
	As $H^0(\alpha)=\lambda \cdot \id_{H^0(L)}$, this concludes the proof.

\end{proof}

\section{Main construction} 

In this section, we consider a smooth complex projective variety $X$ of dimension $n\geq 2$. We recall that if $L$ is a line bundle on $X$ and $W$ is a (non-trivial) subspace of $H^0(L)$ one denotes by 
$$\varphi_{|W|} \colon X \dashrightarrow \bP(W)^*\qquad p\mapsto \{s\in W \,|\, s(p)= 0\}$$
the usual map induced by  global sections of $W$. We will simply write $\varphi_L$ instead of $\varphi_{H^0(L)}$, for brevity. 
\smallskip

\begin{definition}
	\label{DEF:AdmissibleTriple}
    Consider the collection $(X,L,H,r)$ where $X$ is a smooth complex projective variety of dimension $n\geq 2$, $L$ and $H$ are line bundles on $X$ and $r$ is an integer with  $r\geq 2$, satisfying the following conditions:
	\begin{description}
		\item[$A_1$]\label{ASS:A1} $H$ is ample and there exists a smooth irreducible curve $C$ of genus $g\geq 2$ which is complete intersection of divisors in $|H|$;
		\item[$A_2$]\label{ASS:A2} $L$ is big, nef, globally generated, $r\geq \dim(\varphi_L(X))$ and the restriction map of global sections $\rho \colon H^0(L) \to H^0(L|_{C})$ is surjective;
		%\item[$A_3$]\label{ASS:A3} one has $\deg(L|_{C})= rg + 1$;
		%\item[$A_3'$]\label{ASS:A3} if $d=\deg(L|_{C})>2g$ then $\max(d/2,d-2g)\leq r\leq d-g-1$.        
        \item[$A_3$]\label{ASS:A3} If we set $d=\deg(L|_C)$, then either
        \begin{description}
            \item [$A_3(1)$] $d=rg+1$ or;
            \item [$A_3(2)$] $r+g+1\leq d\leq \min(2r, r+2g)$
            and if $d= 2r$, $C$ is not hyperelliptic.             %$\max(d/2,d-2g)\leq r\leq d-g-1$.
        \end{description}
	\end{description}
    We will say that $(X,L,H,r)$ is \emph{admissible} if assumptions $A_1, A_2$ and $A_3$ hold.
\end{definition}

\begin{remark}
We stress that, as we are assuming $r\geq 2$ and $g\geq 2$, it does not exists $d$ that satisfies the two numerical conditions in $A_3(1)$ and $A_3(2)$ simultaneously.
\end{remark}

\begin{remark}
As will be clear in the sequel, the curve $C$ will only be auxiliary to the construction and the results will not depend on the specific choice of $C$. For this reason, $C$ is not part of the building data $(X,L,H,r)$.
\end{remark}

\hfill\par
For any $k \geq 1$, $\Gr(k,H^0(L))$  will denote the  Grassmannian variety parametrizing $k$-dimensional linear susbspaces of $H^0(L)$.

% ----

\begin{lemma}
	\label{lemmatecnico1}
	Let $(X,L,H,r)$ an admissible collection and let $C$ and $\rho$ be as in the Definition \ref{DEF:AdmissibleTriple}. Then $\rho$ induces a rational surjective map
    $$ R_C \colon \Gr(r+1, H^0(L)) \dashrightarrow\Gr(r+1,H^0(L|_{C})), \quad W \mapsto \rho(W).$$
    
     Moreover, for $W$ general in $\Gr(r+1,H^0(L))$, $|W|$ and $|\rho(W)|$ are base points free linear systems.
		% \item{} {\color{blue} if $A_3(1)$ holds}, the multiplication map of global sections 
		% $$\mu_{\rho(W)} \colon \rho(W) \otimes H^0(\omega_C) \to H^0(\omega_C \otimes L|_{C})$$
		% is an isomorphism. 
\end{lemma}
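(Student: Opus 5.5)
The plan is to treat the two assertions separately. Both rest on the surjectivity of $\rho$ in $A_2$ and on a dimension count for $H^0(L|_C)$.

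\emph{Rationality and surjectivity of $R_C$.} Set $V=H^0(L)$, $V_C=H^0(L|_C)$ and $K=\Ker(\rho)$. The map $W\mapsto\rho(W)$ is defined exactly on the open set $U=\{W\in\Gr(r+1,V)\,:\,W\cap K=0\}$. On $U$ it is a morphism: in Plücker terms it is induced by $\wedge^{r+1}\rho$, which is a linear projection restricted to the Grassmannian. For $R_C$ to be a well-defined rational map, $U$ must be non-empty, i.e. $\dim V_C\geq r+1$. I would get this from Riemann--Roch on $C$. Both cases of $A_3$ give $d\geq r+g+1>2g-2$; in case $A_3(1)$ this follows from $d=rg+1\geq 2g+1\geq g+r+1$, since $(r-1)(g-1)\geq 1$. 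Hence $L|_C$ is non-special and $h^0(L|_C)=d-g+1\geq r+2$. So $U\neq\emptyset$, and since $\Gr(r+1,V)$ is irreducible, $U$ is dense. Surjectivity is clear: given $W'\in\Gr(r+1,V_C)$, lift a basis of $W'$ through the surjection $\rho$. The lifts span some $W$ with $\rho(W)=W'$, and then $W\cap K=0$ by dimension. Thus $R_C(U)=\Gr(r+1,V_C)$.

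\emph{Base point freeness for general $W$.} Because $L$ is globally generated, $\varphi_L\colon X\to\bP(V)^*$ is a morphism, with image $Y=\varphi_L(X)$ of dimension $m\leq r$ by $A_2$. The linear system $|W|$ has a base point exactly when the linear subspace $\bP(W^\perp)\subset\bP(V)^*$ meets $Y$; here $W^\perp\subset V^*$ has codimension $r+1$, so $\bP(W^\perp)$ has codimension $r+1$ in $\bP(V)^*$. Consider the incidence variety $\{(y,W)\,:\,y\in Y,\ y\in\bP(W^\perp)\}$. Its fibre over each $y\in Y$ is the set of $W\subset H_y$ for a fixed hyperplane $H_y$, a sub-Grassmannian of codimension $r+1$. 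The incidence therefore has dimension $m+\dim\Gr(r+1,V)-(r+1)<\dim\Gr(r+1,V)$, so its image in $\Gr(r+1,V)$ is a proper closed subset, and a general $W$ is base point free. Equivalently, this is the standard fact that $m+1\leq r+1$ general sections of a base point free system have no common zero.

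For $\rho(W)$ the same argument does not apply directly to a general point of $\Gr(r+1,V_C)$; instead I would argue on $C$ itself. Since $L|_C$ is globally generated (it is a quotient of a globally generated bundle), $\varphi_{L|_C}$ maps $C$ to a curve, and $\dim C=1\leq r$. The incidence argument then shows that base point free subspaces form a dense open subset $U'\subset\Gr(r+1,V_C)$. Alternatively, and more simply, note that $\rho(W)$ is base point free on $C$ as soon as $W$ is base point free on $X$: the sections in $\rho(W)$ are the restrictions of those in $W$, so a common zero on $C$ would be a common zero on $X$. So the general $W$ handles both systems at once. I would still use the open set $R_C^{-1}(U')$, which is dense because $R_C$ is dominant, in case it is needed later.

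I expect the main, though mild, obstacle to be the dimension bookkeeping: showing $h^0(L|_C)\geq r+1$ in each case of $A_3$, and writing out the incidence-variety count in the Grassmannian cleanly.
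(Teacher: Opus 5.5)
Your proposal is correct and follows the paper's proof essentially step by step. Non-speciality of $L|_C$ gives $h^0(L|_C)=d-g+1>r+1$, so a general $W$ meets $\Ker(\rho)$ trivially, and surjectivity of $\rho$ yields surjectivity of $R_C$. Base point freeness of $|W|$ is the statement that a general codimension-$(r+1)$ linear subspace misses $\varphi_L(X)$ (your incidence count just proves this), and $|\rho(W)|$ inherits it by restriction to $C$.

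One bookkeeping fix is needed. The inequalities $r+g+1>2g-2$ and $2g+1\geq g+r+1$ in your chain are false in general: the first fails e.g.\ for $r=2$, $g=10$, the second whenever $r>g$. Justify non-speciality instead by $d=rg+1\geq 2g+1$ under $A_3(1)$. Under $A_3(2)$, note that $r+g+1\leq 2r$ forces $r\geq g+1$, hence $d\geq 2g+2$.
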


\begin{proof}
	By $A_3$ it follows that $\deg(L|_{C}) = d  \geq 2g+1$, so we have $h^1(L|_{C}) = 0$ and $h^0(L|_{C}) = d + 1 -g$.
	By $A_2$, the restriction map $\rho :H^0(L) \to H^0(L|_{C})$
	is surjective, so
	\begin{equation}
		h^0(L) \geq h^0(L|_{C})=d+1-g>r+1
	\end{equation}
    by assumptions $A_1$ and $A_3$.
    \medskip

    In particular, $\Gr(r+1,H^0(L))$ and $\Gr(r+1,H^0(L|_{C}))$ are both not empty and $$\codim_{H^0(L)}(\Ker(\rho))=h^0(L|_C)>r+1.$$
    Hence, for $W\in \Gr(r+1,H^0(L))$ general, we have that $\Ker(\rho) \cap  W = \{0\}$  so  $\rho|_{W} \colon W \to \rho(W)$ is an isomorphism. 
    \smallskip
    
    This defines the rational map $R_C$ which is also surjective since $\rho$ is surjective and by $A_1$.
	\medskip
	
We claim now that there exists a non-empty open subset of $\Gr(r+1,H^0(L))$ which parametrises base point free linear systems. Recall that there exists a canonical isomorphism
	$$\alpha:\Gr(r+1,H^0(L))\to \Gr(h^0(L)-(r+1),H^0(L)^*)$$
	which associates to $W$ the kernel $\Lambda$ of the dual of the inclusion $W\hookrightarrow H^0(L)$. Moreover, if $\Lambda=\alpha(W)$, the projection $\pi:\bP(H^0(L))^*\dashrightarrow \bP W^*\simeq \bP^r$ from $\bP(\Lambda)$ fits into the diagram
	\begin{equation}
		\xymatrix
		{
			X \ar@{->}[r]^-{{\varphi}_L}  \ar@{-->}[rd]_-{\varphi_{|W|}}  &  {\mathbb P}H^0(L)^* 
			\ar@{-->>}[d]^-{\pi}\\ & {\mathbb P}W^* }
	\end{equation}
	As $L$ is globally generated, one has that $\varphi_{|W|}$ is a morphism if and only if $ \varphi_L(X) \cap \bP(\Lambda)$ is empty. 
	Actually, this occurs for general $\Lambda \in \Gr(h^0(L)-r-1,H^0(L)^*)$ since $\codim_{\bP(H^0(L))^*}(\bP(\Lambda))=r+1\geq \dim(\varphi_L(X))+1$ by $A_2$. Hence, for general $W$, one has that $|W|$ is base point free. The elements of the linear system $|\rho(W)|$ are the intersection of the divisors in $|W|$ with $C$, so $|\rho(W)|$ is base point free too.
    \medskip
\end{proof}

\begin{remark}
	\label{REM:bound}
	By Lemma \ref{lemmatecnico1}, since as observed in the above proof, one has $h^0(L|_{C})= d+1-g$, it follows that:
	$$\dim \Gr(r+1,H^0(L)) \geq  \dim \Gr(r+1,H^0(L|_{C}))=(r+1)(d-g-r).$$
    If assumptions $A_3(1)$ holds, one has $\dim \Gr(r+1,H^0(L|_{C}))=(r^2-1)(g-1)$.
\end{remark}
\medskip

Let $W \in \Gr(r+1,H^0(L))$ such that $|W|$ is base point free. Hence, the evaluation map $\ev_W$ associated to $W$ is surjective and its kernel is a locally free sheaf on $X$ of rank $r$ which fits in the following exact sequence
\begin{equation}
	\label{evW}
	0 \to \ker(ev_W) \to W \otimes \cO_X \xrightarrow{\ev_W}  L \to 0,
\end{equation}
whose  dual is
\begin{equation}
	\label{dualevW}
	0 \to L^* \to W^* \otimes \mathcal \cO_X \to {\ker(ev_W)} ^* \to 0.
\end{equation}
We define
\begin{equation}
	\label{defEW}
	E_W \colon = {\ker(ev_W)}^*.   
\end{equation}

\begin{lemma}
	\label{lemmatecnico2}
	Let $W \in \Gr(r+1,H^0(L))$ such that $|W|$ is base point free. Then $E_W$ is a vector bundle on $X$ with the following properties:
	\hfill\par
	\begin{enumerate}[(a)]
		\item{} $\rk E_W= r$, $\det (E_W)= L$ and $c_k(E_W) = c_1(L)^k$ for $k=1,\dots, n$;
		%\item{} the Hilbert polynomial of $E_W$ is \(P_H(E_W)=(r+1)P_H(\cO_X)-P_H(L^*)\); 
		\item{} $H^0(E_W) \simeq W^*$ and $E_W$ is globally generated;
		\item{} $W=\Ima(H^0(\gamma))$ where $\gamma$ is defined in exact sequence \eqref{SES:evE_Duale};
		%\item{} $h^0(E_W \otimes E_W^*) = 1$.
	\end{enumerate}
\end{lemma}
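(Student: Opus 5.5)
We work directly from the dual evaluation sequence \eqref{dualevW},
$$0 \to L^* \to W^*\otimes\cO_X \to E_W \to 0 .$$
Since $|W|$ is base point free, $\ev_W$ is a surjective map of vector bundles. Hence $\ker(\ev_W)$ is locally free of rank $r$, and so is its dual $E_W$.

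For (a), we take determinants in \eqref{evW}. This gives $\det\ker(\ev_W)\simeq L^{-1}$, so $\det E_W\simeq L$. For the Chern classes we use the Whitney formula on \eqref{dualevW}: $c(L^*)\,c(E_W)=1$. Thus
$$c(E_W)=(1-c_1(L))^{-1}=\sum_{k\ge 0}c_1(L)^k,$$
and the series is truncated in degree $n$, so $c_k(E_W)=c_1(L)^k$.

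For (b), global generation is immediate because $E_W$ is a quotient of the trivial bundle $W^*\otimes\cO_X$. For the sections, we take cohomology of \eqref{dualevW}:
$$0\to H^0(L^*)\to W^*\to H^0(E_W)\to H^1(L^*).$$
Since $L$ is nef, big and globally generated, and non-trivial, we have $H^0(L^*)=0$. By Kawamata–Viehweg, equivalently by Kodaira-type vanishing for nef and big line bundles applied via Serre duality, $H^1(L^*)=H^{n-1}(K_X\otimes L)^*=0$ because $n\ge 2$. Hence $H^0(E_W)\simeq W^*$, and the evaluation map of $E_W$ is exactly the quotient map above. This vanishing is the step where the hypotheses on $L$ are genuinely needed, so I expect it to be the main point to get right. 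Under weaker hypotheses one would need $H^1(L^{-1})=0$ directly, but Kawamata–Viehweg in characteristic $0$ suffices here.

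For (c), we identify the sequence \eqref{SES:evE} for $E=E_W$ with \eqref{dualevW}. By (b), $H^0(E_W)=W^*$ and $\ev_{E_W}$ is the map $W^*\otimes\cO_X\to E_W$, so its kernel is the given $L^*$. Dualizing and using $W^{**}=W$ recovers \eqref{evW}. Thus $\gamma\colon H^0(E_W)^*\otimes\cO_X=W\otimes\cO_X\to L$ coincides with $\ev_W$, up to the canonical identification and possibly a nonzero scalar. The map $H^0(\gamma)\colon W\to H^0(L)$ is therefore the inclusion $W\hookrightarrow H^0(L)$, which gives $\Ima H^0(\gamma)=W$.
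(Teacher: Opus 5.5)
Your proposal is correct and follows essentially the same route as the paper. You use Whitney's formula on \eqref{dualevW} for the Chern classes, and the cohomology sequence of \eqref{dualevW} with Kawamata--Viehweg vanishing ($H^0(L^*)=H^1(L^*)=0$) to get $H^0(E_W)\simeq W^*$. You then dualize \eqref{dualevW} to identify $\gamma$ with $\ev_W$, up to canonical identifications and a nonzero scalar, which gives $\Ima H^0(\gamma)=W$. The only differences are cosmetic: you invert $1-c_1(L)$ as a truncated series instead of inducting, and you deduce global generation directly from $E_W$ being a quotient of a trivial bundle.
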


\begin{proof} 
	In order to prove claim $(a)$, recall that $\dim(W)=r+1$ so that $\rk E_W= r$ and $\det (E_W)= L$ by the exact sequence \eqref{dualevW}. From the same sequence, one has
	$$1=c(W^*\otimes \cO_X)=c(L^*)c(E_W)=(1-c_1(L))c(E_W)=1+\sum_{k=1}^n(c_k(E_W)-c_1(L)c_{k-1}(E_W))$$
	by Whitney's sum formula. Then, by induction, one has $c_k(E_W)=c_1(L)^{k}$.
	\smallskip
	
	For claim $(b)$, we get the exact sequence 
	$$0 \to H^0(L^*) \to W^* \to H^0(E_W) \to H^1(L^*) \to \cdots$$
	passing to cohomology from the Exact Sequence \eqref{dualevW}.
	\smallskip

    Since $L$ is big and nef  by $A_2$ we have $H^q(L^*) = 0$ for $q < n$ by Kawamata-Viehweg vanishing Theorem, which implies  $H^0(E_W) \simeq W^*$. Moreover,  the composition of  the map  $W^* \otimes \cO_X \to E_W$ in exact sequence \eqref{dualevW} with the isomorphism  $W^*  \otimes \cO_X \simeq H^0(E_W) \otimes \cO_X$ is actually the evaluation map 
	$\ev_{ H^0(E_W)}$. This implies that $E_W$ is globally generated.
	\smallskip

	In order to prove claim $(c)$, start by dualizing Exact sequence \eqref{dualevW} and use what we observed in $(b)$. One gets the commutative diagram
	\[\begin{tikzcd}
		0 & {{E_W}^*} & {W^{**}\otimes \mathcal O_X} & L & 0 \\
		&& {H^0(E)^*\otimes \mathcal O_X}
		\arrow[from=1-1, to=1-2]
		\arrow[from=1-2, to=1-3]
		\arrow["{(\ev_{H^0(E)})^*}"'{pos=0.4}, from=1-2, to=2-3]
		\arrow["{\gamma'}", from=1-3, to=1-4]
		\arrow["\simeq", from=1-3, to=2-3]
		\arrow[" ", from=1-4, to=1-5]
		\arrow["\gamma"', from=2-3, to=1-4]
	\end{tikzcd}\]
	where $\gamma$ is defined in \eqref{SES:evE_Duale} while $\gamma' $ is ${\ev_W}^{**}$ composed via the canonical isomorphism $L\simeq L^{**}$. 
	
	In particular, passing to cohomology, we have that the images of $H^0(\gamma)$ and $H^0(\gamma')$ coincide. By construction we have $\Ima(H^0(\gamma'))=W$.
	\smallskip
	
	% {\color{blue} For claim (d), consider the exact sequences \eqref{evW} and \eqref{dualevW} tensored with $E_W$ and $L$, respectively: 
		% $$0 \to  E_W^* \otimes E_W \to W \otimes E_W \xrightarrow{\alpha} L \otimes E_W \to 0\qquad 0 \to \mathcal O_X \to W^* \otimes L \xrightarrow{\beta} L\otimes E_W \to 0.$$
		% Recall also that both $\alpha$ and $\beta$ are induced by evaluation maps of global sections ($\alpha$ by construction, $\beta$ as shown in the proof of Lemma \ref{lemmatecnico1} vie the isomorphism $W^*\simeq H^0(E_W)$).
		% Passing to cohomology one gets a commutative diagram
		% \[\begin{tikzcd}[sep=small]
			% 	0 & {H^0(E_W^*\otimes E_W)} & {W\otimes H^0(E_W)} && {H^1(\cO_X)} & \cdots \\
			% 	&&& {H^0(E\otimes L)} \\
			% 	0 & {H^0(\cO_X)} & {H^0(L)\otimes H^0(E_W)} && {H^1(E_W^*\otimes E_W)} & \cdots
			% 	\arrow[from=1-1, to=1-2]
			% 	\arrow[hook,from=1-2, to=1-3]
			% 	\arrow["{h^0(\alpha)}"{description}, bend right=-20,  from=1-3, to=2-4]
			% 	\arrow["\iota\otimes \id", hook, from=1-3, to=3-3]
			%         \arrow[from=1-5, to=1-6]
			% 	\arrow[bend right=-20, from=2-4, to=1-5]
			% 	\arrow[bend right=20, from=2-4, to=3-5]
			% 	\arrow[from=3-1, to=3-2]
			% 	\arrow[hook,from=3-2, to=3-3]
			% 	\arrow["{h^0(\beta)}"{description}, bend right=20, from=3-3, to=2-4]
			%         \arrow[from=3-5, to=3-6]
			% \end{tikzcd}\]
		% with exact ``rows''. Hence, $\End(E_W)\simeq H^0(E_W^* \otimes E_W) \simeq \ker(h^0(\alpha))$ is a subspace, via the inclusion $\iota\otimes \id$, of $\Ker(h^0(\beta))\simeq H^0(\cO_X)\simeq \bC$. Being $\End(E_W)$ non-trivial, one concludes. 
		% }
	
\end{proof}

\begin{remark}
	By Lemma \ref{lemmatecnico2} it follows that 
	$$\Delta_H(E_W)=(r+1)c_1(L)^2H^{n-2}>0,$$ 
	so the vector bundle $E_W$  satisfies the generalized Bogomolov's necessary condition (see Equation \ref{EQ:Discriminant}) for $\mu_H$-semistability. Actually, we will prove in Proposition \ref{prop: stableEw} that $E_W$ is $\mu_H$-stable.
\end{remark}

Let  $C$ as in Definition \ref{DEF:AdmissibleTriple}. If $W\in \Gr(r+1,H^0(L))$ is general, by Lemma \ref{lemmatecnico1} we have that $\rho(W) \in \Gr(r+1,H^0(L|_{C}))$ and, moreover, $|\rho(W)|$ is base points free. This means that the evaluation map $ev_{\rho(W)} \colon \rho(W) \otimes  {\mathcal O}_C \to L|_{C}$ is surjective.  Its kernel is a locally free sheaf on $C$ which fits in the exact sequence
\begin{equation}
	\label{evrhoW}
	0 \to \ker(ev_{\rho(W)}) \to \rho(W)\otimes {\mathcal O}_C \to  L|_{C} \to 0,
\end{equation}
whose  dual is
\begin{equation}
	\label{dualevrhoW}
	0 \to {L^*}|_{C} \to \rho(W)^* \otimes {\mathcal O}_C \to {\ker( ev_{\rho(W)})}^* \to 0.
\end{equation}
Then, by construction,
\begin{equation}
	\label{defErhoW}
	E_{\rho(W)} \colon = {\ker(ev_{\rho(W)})}^*  
\end{equation}
is a vector bundle of rank $r$, whose determinant is $\det E_{\rho(W)}= L|_{C}$.

\begin{remark}
\label{REM:GloGen}
The same argument used in Lemma \ref{lemmatecnico2} proves that $E_{\rho(W)}$ is globally generated.
\end{remark}

% \begin{lemma}
% 	\label{lemmatecnico3}
% 	Let $W \in U_C$, then  $E_{\rho(W)}$ is a vector bundle on $C$ with the following properties:
% 	\hfill\par\noindent
% 	\begin{enumerate}[(a)]
% 		\item{} $\rk(E_{\rho(W)})=r$ and $\det E_{\rho(W)}= L|_{C}$;
% 		\item{} $H^0(E_{\rho(W)}) \simeq \rho(W)^*$ and  $E_{\rho(W)}$  is globally generated;
% 		\item{} $E_{\rho(W)}$ is stable.
% 	\end{enumerate}
% \end{lemma}

\begin{lemma}
	\label{lemmatecnico3}
	Let $C$ be as in Definition \ref{DEF:AdmissibleTriple}. Then, there exists an open dense subset $U_C \subseteq \Gr(r+1,H^0(L))$ such that  for any $W\in U_C$, $\vert W \vert $ is base points free, $\rho(W) \simeq W$ and  $E_{\rho(W)}$ is a stable vector bundle.
\end{lemma}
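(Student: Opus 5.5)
The argument splits into an open-set part and a stability part. By Lemma \ref{lemmatecnico1} there is a nonempty open subset $V\subseteq \Gr(r+1,H^0(L))$ on which $W\cap\Ker(\rho)=\{0\}$, so $\rho|_W\colon W\to\rho(W)$ is an isomorphism. On the same set $|W|$ is base point free, and hence so is $|\rho(W)|$. It then suffices to show that stability of $E_{\rho(W)}$ holds for $W$ in a nonempty open subset of $V$, and to take $U_C$ to be this subset; it is dense since the Grassmannian is irreducible.

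The plan is to work on $C$ and pull back along $R_C$. On $\Gr(r+1,H^0(L|_C))$, the locus $V'$ of base point free subspaces $V_0$ for which $M_{V_0,L|_C}=\ker(\ev_{V_0})$ is stable is open. Indeed, the kernels form a flat family of vector bundles over the open set of base point free subspaces, and stability is an open condition in flat families on a curve. Dualizing preserves stability, so $E_{V_0}$ is stable on $V'$. Since $R_C$ is a dominant (surjective) rational map defined on $V$, the preimage $R_C^{-1}(V')\cap V$ is open, and it is nonempty as soon as $V'\neq\emptyset$. We then set $U_C=R_C^{-1}(V')\cap V$.

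The main obstacle is showing $V'\neq\emptyset$, that is, that a general $(r+1)$-dimensional subspace of $H^0(L|_C)$ gives a stable kernel bundle. The two alternatives of $A_3$ are handled by different external results.

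\textbf{Case $A_3(2)$}, where $r+g+1\le d\le \min(2r,r+2g)$ and $C$ is non-hyperelliptic if $d=2r$. Writing the dimension of the linear series as $r$ and its degree as $d$, I would invoke the result of \cite{Mis08} (building on Butler \cite{But94}). It gives stability of $M_{V_0,L|_C}$ for a general linear series of dimension $r$ and degree $d$ in exactly this numerical range, with the hyperelliptic exception at $d=2r$. The hypothesis $d\ge r+g+1$ guarantees $h^0(L|_C)\ge r+2$, so the subspaces are genuinely incomplete.

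\textbf{Case $A_3(1)$}, where $d=rg+1$. Here I would argue directly. The slope of $M=M_{V_0,L|_C}$ is $-(rg+1)/r$, so $\mu(E)=g+1/r$, with $E=E_{V_0}$. Suppose $F\subset E$ is a quotient-destabilizing object of rank $s<r$, i.e.\ there is a quotient $E\to Q$ with $\mu(Q)\le g+1/r$. Since $E$ is globally generated by $V_0^*$, $Q$ is globally generated, with image of $V_0^*$ of dimension at least $\rk Q+1$ (otherwise $Q$ is trivial, contradicting $\deg Q>0$, which follows from $H^0(E^*)=0$, itself a consequence of $H^0(L^*|_C)=0$). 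By Clifford's theorem or a standard bound on globally generated bundles, $\deg Q\ge$ something of order $g\cdot \rk Q$, and the case $\deg Q\le g\rk Q$ would require $Q$ to come from a special configuration.

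For a general $V_0$ this is excluded by a dimension count. Subspaces $V_0$ for which $M_{V_0}$ admits such a destabilizing quotient form a family whose dimension I would estimate via the parameters of $(Q,\text{its sections})$. I would compare it with $\dim\Gr(r+1,h^0)=(r^2-1)(g-1)$ (Remark \ref{REM:bound}) and show it is strictly smaller. Alternatively, I would quote the known result (Butler, or Brambila-Paz et al.\ on coherent systems) that for $d=rg+1$ the general $(r+1)$-dimensional linear series has stable kernel. This count is the step I expect to be hardest.
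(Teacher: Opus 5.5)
Your reduction to $C$ and the pull-back along $R_C$ are fine; your openness argument via flat families is more explicit than the paper's. Case $A_3(2)$ is handled exactly as in the paper: with $c=d-g-r$ the hypotheses become $1\le c\le g$ and $d\ge 2g+2c$, and \cite[Theorem 1.3]{Mis08} applies. The gap is case $A_3(1)$, which your proposal does not actually prove. No Clifford-type bound excludes globally generated quotients $Q$ with $\mu(Q)\le g+1/r$. For $\rk Q=1$, a base-point-free gonal pencil already has degree at most $\lfloor (g+3)/2\rfloor\le g$. So everything rests on a dimension count that you never carry out. The fallback citation is not available as stated either. Results on general linear series of this kind are typically proved on a general curve, whereas $C$ is a fixed complete intersection on $X$. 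Moreover, here $c=h^0(L|_C)-(r+1)=(r-1)(g-1)$ and $d=rg+1<2g+2c$, which is outside the range of \cite{Mis08}.

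The missing idea is to use the generality of $W$ only to pin down the number of sections, and then argue elementarily. Since $d=rg+1$, one has $\chi(E_{\rho(W)})=r+1$. By the cohomology of \eqref{dualevrhoW}, $H^0(E_{\rho(W)})\simeq\rho(W)^*$ holds iff $h^1(E_{\rho(W)})=0$. This in turn holds iff the multiplication map $m_{\rho(W)}\colon\rho(W)\otimes H^0(\omega_C)\to H^0(\omega_C\otimes L|_C)$, both sides of dimension $(r+1)g$, is an isomorphism. The paper gets this for general $\rho(W)$ from \cite{Bri} and pulls back along $R_C$.

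Stability then follows. Let $G\subset E_{\rho(W)}$ be a subbundle of rank $s<r$ with $\mu(G)\ge g+1/r$. Then $\deg G\ge sg+1$, hence $h^0(G)\ge\chi(G)\ge s+1$. Since $E_{\rho(W)}$ is generated by exactly $r+1$ sections, $h^0(G)\ge s+2$ would leave those sections spanning rank $<r$ somewhere. The same happens at any point where $H^0(G)$ fails to generate $G$. Hence $h^0(G)=s+1$, $G$ is globally generated, $h^1(G)=0$ and $\deg G=sg+1$. The kernel of $H^0(G)\otimes\cO_C\to G$ is then a line bundle of degree $-(sg+1)$ contained in $L^*|_C$, the kernel of the evaluation map of $E_{\rho(W)}$. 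This gives $-(sg+1)\le-(rg+1)$, i.e.\ $s\ge r$, a contradiction. The equality $h^0(E_{\rho(W)})=r+1$ is used essentially here, which is why the Petri-type step cannot be skipped.
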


\begin{proof}
	Let $W \in  \Gr(r+1,H^0(L))$ such that $\vert W \vert $ is base points free and $\rho(W) \simeq W$.  We distinguish two cases depending on whether $A_3(1)$ or $A_3(2)$ applies. 

\begin{itemize}
    \item Assume that $A_3(1)$ holds. We consider the exact sequence induced by  \eqref{dualevrhoW}, passing to cohomology:
	$$ 0 \to \rho(W)^* \to H^0(E_{\rho(W)}) \to H^1(L^*|_C) \to \rho(W)^* \otimes H^1(\cO_C) \to H^1(E_{\rho(W)}) \to 0.
	$$
	By $A_3(1)$, one has $\deg(E_{\rho(W)})= rg+1$ so that $\chi(E_{\rho(W)})=r+1$.
	This implies that
	$\rho(W)^* \simeq H^0(E_{\rho(W)})$ if and only if $h^1(E_{\rho(W)})=0$. This happens exactly when the map 
	$$H^1(L^*|_{C}) \to \rho(W)^* \otimes H^1({\mathcal O}_C)$$
	is an isomorphism i.e. when the dual map 
	$$m_{\rho(W)} \colon \rho(W) \otimes H^0(\omega_C) \to H^0(\omega_C \otimes L|_{C})$$
	is an isomorphism. 
    \medskip

	We claim that for general $W$, the multiplication map $m_{\rho(W)}$ is an isomorphism. Since $\deg(L|_C)=rg+1$ we have $\deg(L|_C\otimes \omega_C)=g(r+2)-1\geq 2g-1$ so
	$$h^1(L|_C\otimes \omega_C)=0\qquad \mbox{ and }\qquad h^0(L|_C\otimes \omega_C)=\chi(L|_C\otimes \omega_C)=g(r+1).$$
	By \cite{Bri}, one has that $\mu_{W'}$ is surjective for $W'$ general in $\Gr(r+1,H^0(L|_C))$ so
	$$V = \{ W' \in \Gr(r+1, H^0(L|_{C})) \,| \, m_{W'} \  \text{is  an isomorphism} \}$$
	is a dense open subset of $\Gr(r+1,H^0(L|_C))$.
	\smallskip
	As $R_C$ is a rastional surjective map, see \ref{lemmatecnico1}, 
	then, $R_C^{-1}(V)$ is a non-empty open subset of $\Gr(r+1,H^0(L))$, and $m_{\rho(W)}$ is an isomorphism for $W \in R_C^{-1}(V)$. Summing up, we concluded that for $W$ general, $H^0(E_{\rho(W)})\simeq \rho(W)^*$.
    \medskip

    In order to prove the stability of $E_{\rho(W)}$, %we use the same argument of \cite{BV02}, adapted to our case. 
    we assume that there exists a proper subbundle $G \subset E_{\rho(W)}$ of degree $d$ and rank $s \leq r-1$, such that 
	$$\mu(G) =\frac{d}{s} \geq \mu(E_{\rho(W)})=\frac{rg+1}{r}.$$ This implies that 
	$$d \geq sg+1\qquad 
	\mbox{ and } \qquad \chi(G)\geq sg+1+s(1-g)=s+1.$$ 
	In particular, we have that $h^0(G)\geq s+1$. We claim now that $h^0(G)=s+1$ and that $G$ is globally generated. 
	\smallskip
	
	Recall that $G$ is a subbundle of $E_{\rho(W)}$, which is globally generated and is such that $h^0(E_{\rho(W)})= r+1$. Assume, by contraddiction, that $h^0(G)>s+1$. Then, the sections of $H^0(G)\subseteq H^0(E_{\rho(W)})$ span a vector bundle $G'$ in $E_{\rho(W)}$ of rank at most $s$. On the other hand, the remaining sections of $H^0(E_{\rho(W)})$ cannot increase the rank of the spanned vector bundle by more than $h^0(E_{\rho(W)})-h^0(G)<r+1-(s+1)=r-s$. This is impossible as we could have that $E_{\rho(W)}$ is not globally generated: we have that $h^0(G)=s+1$.
	
	In a similar way one proves that $G$ is globally generated (since otherwise we would have points on $C$ where $s+1$ sections would span a vector space of dimension lower than $s$).
	\smallskip
	
	Being $h^0(G)=s+1$, we have 
	$$s+1-h^1(G)=\chi(G)\geq s+1$$
	so that $h^1(G)=0$ and $\deg(G)=sg+1$.
	
	The evaluation maps of $G$ and $E_{\rho(W)}$, fit in the commutative diagram
	\begin{equation}
		\xymatrix@R=10pt{
			0 \ar[r] & M^* \ar[r]\ar@{^{(}->}[d] & H^0(G)\otimes \cO_C \ar[r]\ar@{^{(}->}[d] & G \ar[r]\ar@{^{(}->}[d] & 0     \\
			0 \ar[r] & L^* \ar[r] & H^0(E_{\rho(W)})\otimes \cO_C \ar[r] & E_{\rho(W)} \ar[r] & 0     \\
		}
	\end{equation}
	Since $M^*$ is a subsheaf of $L^*$ we have
	$$\deg(M^*)=-\deg(G)=-(sg+1)\leq \deg(L^*)=-(rg+1),$$
	and thus
	$s\geq r$, which is impossible. 
    \item  Assume that $A_3(2)$ hold. 
    By the assumptions it follows immediately that $r\geq g+1$ and $d=\deg(L|_C)\geq 2g+2$ so $h^0(L|_C)=d+1-g$. We set 
    $$c:=\codim_{H^0(L|_C)}(\rho(W))=h^0(L|_C)-(r+1)=d-g-r.$$
    By our assumptions on $d$, it follows that
    $$1 \leq c \leq g \qquad \mbox{ and }\qquad d \geq 2g+2c.$$
    Then we can apply \cite[Theorem 1.3]{Mis08}: $\Ker(ev_{V})$ is stable for a general $V \subset H^0(L|_C)$  of codimension $c$, unless $d=2g+2c$ and $C$ is hyperelliptic (case which is excluded by $A_3(2)$). Since the rational map $R_C$ is surjective, see lemma \ref{lemmatecnico1}, this gives a non-empty open subset of $\Gr(r+1,H^0(L))$ such that $E_{\rho(W)}$ is stable for any $W \in U_C$.
\end{itemize}
\end{proof}

\begin{proposition}\label{prop: stableEw}
	Let $C$ be as in Definition \ref{DEF:AdmissibleTriple} and consider $W \in U_C$. Then  
	\begin{enumerate}[(a)]
		\item $E_W|_C\simeq E_{\rho(W)}$;
		\item $E_W$ is $\mu_H$-stable;
		\item the determinant map $d_{E_W}$ is injective and has image $W$.
	\end{enumerate}
\end{proposition}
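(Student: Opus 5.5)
For (a), restrict the exact sequence \eqref{evW} to $C$. Since all terms are locally free, restriction is exact, and we get
$$0 \to \ker(\ev_W)|_C \to W\otimes \cO_C \to L|_C \to 0.$$
The map $W\otimes\cO_C\to L|_C$ is the evaluation of the restricted sections. Because $W\in U_C$, the map $\rho|_W\colon W\to\rho(W)$ is an isomorphism. Under this identification the sequence becomes \eqref{evrhoW}, so $\ker(\ev_W)|_C\simeq \ker(\ev_{\rho(W)})$. Dualizing, and using that dualization commutes with restriction for vector bundles, gives $E_W|_C\simeq E_{\rho(W)}$.

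For (b), argue by contradiction. Suppose $F\subset E_W$ is a saturated subsheaf with $0<\rk F=s<r$ and $\mu_H(F)\ge\mu_H(E_W)$. We may replace $E_W$ by its dual $M=\ker(\ev_W)$ if convenient, but the direct route suffices. Saturated subsheaves of a bundle on a smooth variety are reflexive, hence locally free outside a closed subset $Z$ of codimension $\ge 2$, and $E_W/F$ is torsion free. A complete intersection curve $C$ of divisors in $|H|$ moves in a family covering $X$. Since stability of $E_{\rho(W)}$ holds on an open set of $U_C$, we may choose $C$ general enough to avoid $Z$, and also to avoid the locus where $E_W/F$ fails to be locally free; that locus also has codimension $\ge2$. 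The main technical point is doing this while keeping $W\in U_C$. To handle it, I would first fix $W$ and then note that the conditions on $C$ defining $U_C$ are open in the family of complete intersection curves; if that is delicate, I would instead use that the restriction $F|_C\to E_W|_C$ is generically injective, which still produces a subsheaf whose saturation has degree $\ge \deg F|_C$.

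With such a $C$, the sheaf $F|_C$ is a subbundle of $E_W|_C\simeq E_{\rho(W)}$ of rank $s$. Its degree is
$$\deg F|_C=c_1(F)\cdot H^{n-1}=s\,\mu_H(F)\ge s\,\mu_H(E_W),$$
and similarly $\mu(E_{\rho(W)})=\mu_H(E_W)$. So $\mu(F|_C)\ge\mu(E_{\rho(W)})$, contradicting the stability of $E_{\rho(W)}$ given by Lemma \ref{lemmatecnico3}. Hence $E_W$ is $\mu_H$-stable.

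For (c), combine the previous results. By Lemma \ref{lemmatecnico2}(b), $E_W$ is globally generated with $h^0(E_W)=r+1$, and by (b) it is $\mu_H$-stable. Proposition \ref{prop: injectivitydE} therefore applies: $d_{E_W}$ is injective and $\Ima(d_{E_W})=\Ima(H^0(\gamma))$. Lemma \ref{lemmatecnico2}(c) identifies $\Ima(H^0(\gamma))$ with $W$, which concludes. I expect the main obstacle to be the genericity argument in (b) that places $C$ away from the singular locus of $F$ while keeping the restriction stable.
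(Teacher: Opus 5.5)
Your (a) and (c) follow the paper's argument. The paper restricts the dual sequence \eqref{dualevW} rather than \eqref{evW}, which is immaterial. In (b) the paper only tests a destabilizing \emph{subbundle} $G\subset E_W$, for which $G|_C\subset E_W|_C$ is automatic. You are right that a destabilizing saturated subsheaf $F$ need not be a subbundle, but your way of handling it has a genuine gap.

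\textbf{Where the argument fails.} The proposition fixes $C$, and $A_1$ only says that $H$ is ample and that one smooth complete intersection curve of divisors in $|H|$ exists. So $|H|$ may have base points, and $C$ need not move at all. For $n=3$ and $h^0(H)=2$ the only such curve is the base curve of the pencil; for a surface with $p_g=1$ and $H=K_S$ one has $|K_S|=\{C\}$. Hence you cannot in general move $C$ off $Z$ and off the non-locally-free locus of $E_W/F$, however open the condition $W\in U_C$ is. Your fallback also fails for $n\geq 3$. That locus has codimension $2$ and may contain $C$, and then $F|_C\to E_W|_C$ need not be generically injective. For instance, let $D_1=\{s_1=0\}$ and $D_2=\{s_2=0\}$ be two of the divisors cutting out $C$. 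The map $\cO_X(-H)\xrightarrow{(s_2,-s_1)}\cO_X^{2}$ has torsion-free cokernel $\cI_{D_1\cap D_2}(H)$, yet it vanishes identically on $C$.

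\textbf{The repair.} The missing idea is to cut by one divisor at a time and saturate at each step.
\begin{enumerate}
\item Let $F\subset E_W$ be saturated of rank $s$ and $D\in|H|$. A local equation of $D$ is a non-zero-divisor on the torsion-free sheaves $F$ and $E_W/F$. Hence $\mathcal{T}or_1(E_W/F,\cO_D)=0$, so $F|_D\hookrightarrow E_W|_D$.
\item The sequence $0\to F(-H)\to F\to F|_D\to 0$ shows, via Hilbert polynomials, that $F|_D$ has rank $s$ and the same $H$-degree $c_1(F)\cdot H^{n-1}$. Replacing $F|_D$ by its saturation in $E_W|_D$ can only increase the degree.
\item Iterate along $X\supset D_1\supset D_1\cap D_2\supset\cdots\supset C$. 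The intermediate complete intersections are Cohen--Macaulay and smooth along $C$. Each component contains $C$, since it meets the remaining divisors in a curve inside the irreducible curve $C$. They are therefore irreducible and generically reduced, hence integral, so the argument of step 1 applies at every stage.
\item You end with a rank $s$ subsheaf of $E_W|_C\simeq E_{\rho(W)}$ of slope at least $\mu_H(F)\geq\mu_H(E_W)=\mu(E_{\rho(W)})$. This contradicts Lemma \ref{lemmatecnico3}.
\end{enumerate}
For $n=2$ this is a single step and needs no genericity. There $F$ is reflexive, hence locally free, and $F|_C\hookrightarrow E_W|_C$ directly.
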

\begin{proof} Let $W \in U_C$, $E_W$  defined in Equation \eqref{defEW} and $E_{\rho(W)}$ defined in Equation \eqref{defErhoW}. \hfill\par\noindent
	In order to prove claim (a), we start by tensoring \eqref{dualevW} with $\mathcal O_C$ and get the exact sequence
	\begin{equation}
		0 \to L^*|_{C} \xrightarrow{(\ev_W^*)|_C} W^* \otimes \mathcal \cO_C \to E_W|_{C} \to 0.
	\end{equation}
	By Lemma \ref{lemmatecnico1} we have that $W \simeq \rho(W)$, hence 
	$W^* \simeq \rho(W)^*$, moreover $$(\ev_W^*)|_C = (\ev_W |_C)^*$$ so  we get the commutative diagram
	\begin{equation}
		\begin{tikzcd}
			0 & {L|_C^*} & {W^*\otimes \cO_C} & {E_W|_C} & 0 \\
			0 & {(L|_C)^*} & {\rho(W)^*\otimes \cO_C} & {E_{\rho(W)}} & 0.
			\arrow[from=1-1, to=1-2]
			\arrow["{(\ev_W|C)^*}", from=1-2, to=1-3]
			\arrow["{\simeq }", from=1-2, to=2-2]
			\arrow[from=1-3, to=1-4]
			\arrow["{\rho^*\otimes\id}", from=1-3, to=2-3]
			\arrow[from=1-4, to=1-5]
			\arrow["{\simeq }", from=1-4, to=2-4]
			\arrow[from=2-1, to=2-2]
			\arrow["{\ev_{\rho(W)}^*}", from=2-2, to=2-3]
			\arrow[from=2-3, to=2-4]
			\arrow[from=2-4, to=2-5]
		\end{tikzcd}
	\end{equation}
	
	To prove claim (b), assume that there exists a subbundle $G \hookrightarrow E_W$ of rank $s \leq r-1$ such that $\mu_H(G) \geq \mu_H(E_W)$.
	Being $C$ a complete intersection of divisors in $|H|$ one has $\deg(F|_C)=c_1(F)\cdot H^{n-1}$ for any vector bundle $F$ on $X$. In particular, $G|_C$ and $E_W|_C$ are vector bundles on $C$ which satisfy 
	$$\mu(G|_C)=\frac{\deg(G|_{C})}{s}=\frac{c_1(G)\cdot H^{n-1}}{s}= \mu_H(G) \qquad$$
	$$ \mu(E_W|_{C})=\frac{\deg(E_W|_{C})}{r}=\frac{c_1(E_W)\cdot H^{n-1}}{r}= \mu_H(E_W)$$
	so that $\mu(G|_C) \geq \mu(E_W|_C)$. In particular, $E_W|_C$ is not stable. This is impossible, since $E_W|_C$ is isomorphic to $E_{\rho(W)}$, which is stable by Lemma \ref{lemmatecnico3}.
	\smallskip
	
	In order to prove claim (c), since  $E_W$ is $\mu_H$-stable, globally generated and has $h^0(E_W)= r+1$ (by Lemma \ref{lemmatecnico2}), then,  by Proposition \ref{prop: injectivitydE}, one has that the determinant map $d_E$ is injective and $\Ima (d_E)= \Ima H^0(\gamma)$.
	Hence, by Lemma \ref{lemmatecnico2}, one gets $\Ima H^0(\gamma)= W$. This concludes the proof.
\end{proof}

\begin{remark}
	The same conclusion holds when $C$ is taken to be a smooth irreducible curve of genus $g\geq 2$ such that 
	$$\deg(F|_C)=c_1(F)\cdot H^{n-1}\qquad \mbox{ for any vector bundles } F \mbox{ on } X.$$
	This is clearly true if $C$ is a complete intersection of divisors in $|H|$.
\end{remark}

\begin{definition}
	\label{DEF:definizione di U}
	Assume that $(X,L,H,r)$ is admissible. We set $U$ to be the union of all the open dense subsets $U_C$ defined in Lemma \ref{lemmatecnico3}. We also set $\underline{c}= (c_1(L), c_1(L)^2, \cdots, c_1(L)^n)$.
\end{definition}

\hfill\par
Consider now the moduli space parametrizing $\mu_H$-semistable vector bundles $E$ on $X$ with rank $r$, $\det E \simeq L$ and Chern classes $c_i(E)= c_1(L)^i$, $i= 2,\dots, n$.
\smallskip 

Let $U \subset \Gr(r+1,H^0(L))$ the open subset defined in Definition \ref{DEF:definizione di U}.
We have a map 
\begin{equation}
	%\label{DEF:Phi}
	U \to  \cM^s_H(r,L,\underline{c}) \qquad 
	W \mapsto [E_W].
\end{equation}
\begin{proposition}
	The above map defines   a rational map 
	$$\Phi :\Gr(r+1,H^0(L)) \dashrightarrow   \cM^s_H(r,L,\underline{c}).$$
	In particular, $\cM^s_H(r,L, \underline{c})$  is not empty. 
\end{proposition}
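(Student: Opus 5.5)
To show that $W\mapsto[E_W]$ is a rational map, I will realize it as the classifying morphism of a flat family of $\mu_H$-stable bundles over the open set $U$, and then invoke the coarse moduli property of $\cM^s_H(r,L,\underline{c})$. Let $\mathcal W\subset H^0(L)\otimes\cO_G$ be the tautological subbundle of rank $r+1$ on $G=\Gr(r+1,H^0(L))$. On $X\times G$, with projections $p_X,p_G$, there is an evaluation morphism
\[
\mathcal{E}v\colon p_G^*\mathcal W\to p_X^*L,
\]
and its restriction to $X\times\{W\}$ is $\ev_W$. The set of $W$ with $|W|$ base point free is open: its complement is the image under the proper map $p_G$ of the closed locus where $\mathcal{E}v$ fails to be surjective. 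This set contains $U$, and $U$ is nonempty and open because it is a union of nonempty open sets $U_C$ (Lemma \ref{lemmatecnico3}).

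Over $X\times U$ the map $\mathcal{E}v$ is a surjection of vector bundles. Its kernel $\mathcal K$ is therefore locally free of rank $r$, and kernel formation commutes with base change, so $\mathcal K|_{X\times\{W\}}\simeq \ker(\ev_W)$. Setting $\mathcal E:=\mathcal K^*$, we get a locally free sheaf on $X\times U$, hence flat over $U$, and dualizing commutes with restriction, so $\mathcal E|_{X\times\{W\}}\simeq E_W$. By Proposition \ref{prop: stableEw}(b) each fibre $E_W$ is $\mu_H$-stable. By Lemma \ref{lemmatecnico2}(a) it has rank $r$, determinant $L$ and Chern classes $\underline{c}$. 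Hence $\mathcal E$ is a flat family of $\mu_H$-stable bundles of the prescribed numerical type, and the determinants $\det\mathcal E\simeq p_X^*L\otimes p_G^*\det(\mathcal W)^{-1}$ are all isomorphic to $L$ fibrewise.

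Since $\cM^s_H(r,\underline{c})$ corepresents the moduli functor (\cite{HL}, \cite{Mar77}), the family $\mathcal E$ induces a morphism $U\to\cM^s_H(r,\underline{c})$ sending $W$ to $[E_W]$. Composing with $\det$ gives the constant map to $L$, so the morphism factors through the fibre $\cM^s_H(r,L,\underline{c})$. A morphism defined on a dense open subset of the irreducible variety $G$ is a rational map $\Phi$. Nonemptiness of $\cM^s_H(r,L,\underline{c})$ then follows because $U\neq\emptyset$ and every point of $U$ is sent to a point of it.

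The main obstacle I expect is justifying the use of the universal property. This requires checking that the moduli space used here, for $\mu_H$-stable bundles with fixed determinant and Chern classes, corepresents families over arbitrary base schemes. Maruyama's result provides this, and the only further check is that fixing the determinant is compatible, which the factorisation through the fibre of $\det$ handles. Everything else is a base-change argument for the locally free kernel.
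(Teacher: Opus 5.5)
Your proposal is correct and follows essentially the same route as the paper. Both arguments compose the tautological subbundle on $\Gr(r+1,H^0(L))\times X$ with the pulled-back evaluation map, take the locally free kernel over $U\times X$, and dualize to obtain a family with fibres $E_W$, so that $W\mapsto[E_W]$ is a morphism on $U$. The extra points you make explicit are all sound, though the paper leaves them implicit: openness of the base-point-free locus, compatibility of the kernel with base change, the coarse moduli property, and the factorization through the fibre of $\det$ via $\det\mathcal E\simeq p_X^*L\otimes p_G^*\det(\mathcal W)^{-1}$.
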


\begin{proof}
	We prove that $\Phi|_{U}$ is a morphism.
	Let $\cU$ and $\cQ$ be the universal and quotient bundle on $G:=\Gr(r+1,H^0(L))$. They fit into the exact sequence
	\begin{equation}
		0\to \cU\hookrightarrow H^0(L)\otimes \cO_G \to \cQ \to 0.
	\end{equation}
	Consider the product $G\times X$ with its projection $\pi_i$ on its factors. 
	By pulling back along $\pi_1$ one has an inclusion $\pi_1^*\cU\hookrightarrow H^0(L)\otimes \cO_{G\times X}$. On the other hand, one can also pullback the evaluation map $\ev_{H^0(L)}\otimes \cO_X\to L$ along $\pi_2$. The composition $\theta$ of these maps gives a commutative diagram
	$$
	\xymatrix{
		\pi_1^*(\cU) \ar@{^{(}->}[r]\ar[rd]_-{\theta} & H^0(L)\otimes \cO_{G\times X}\ar[d]^-{\pi_2^*(\ev)} \\
		& \pi_2^*(L)
	}
	$$
	In particular, $\theta|_{\{W\}\times X}$ is the evaluation map $\ev_W:W\otimes \cO_X\to L$.
	Hence, $\Ker(\theta)$ is locally free on the open set $U\times X$ by the above results. Then we can set 
	$$\cE=\cHom\left(\Ker(\theta)|_{U\times X},\cO_{U\times X}\right)$$
	and observe that for all $W\in U$ one has
	$$\cE|_{\{W\}\times X}\simeq \Ker(W\otimes \cO_{X}\to L)^*=E_W.$$
	This implies that the map $U \to \cM^s_H(r,L,\underline{c})$ such that $W \mapsto [E_W]$ is a morphism.
\end{proof}

We stress that, a priori, $\Phi$ could be defined on a bigger open subset containing $U$. 

\begin{theorem}
\label{thm:inj_mor_moduli}
	The restriction $\Phi \colon U \to \cM^s_H(r,L,\underline{c})$  is an injective morphism. In particular, the moduli space $ \cM^s_H(r,L,\underline{c})$ contains a variety birational to $\Gr(r+1,H^0(L))$.
\end{theorem}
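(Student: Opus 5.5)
Injectivity will follow by recovering $W$ from the isomorphism class of $E_W$, using part (c) of Proposition \ref{prop: stableEw}. Let $W_1,W_2\in U$ with $\Phi(W_1)=\Phi(W_2)$, so there is an isomorphism $f\colon E_{W_1}\to E_{W_2}$. Set $E=E_{W_1}$. By Lemma \ref{lemmatecnico2}, $H^0(E_{W_i})\simeq W_i^*$, both bundles are globally generated with $h^0=r+1$, and both have determinant $L$. The isomorphism $f$ induces an isomorphism $H^0(f)\colon H^0(E_{W_1})\to H^0(E_{W_2})$ compatible with the evaluation maps, so it gives an isomorphism of the exact sequences \eqref{SES:evE}. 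It also induces $\det f\colon L\to L$, which is a nonzero homothety $\lambda\cdot\id_L$, since $X$ is projective and $H^0(\cO_X)=\bC$.

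Next I compare the determinant maps. By functoriality of $\wedge^r$ we have $d_{E_{W_2}}\circ \wedge^r H^0(f)=H^0(\det f)\circ d_{E_{W_1}}=\lambda\, d_{E_{W_1}}$. Since $\wedge^r H^0(f)$ is an isomorphism, the images agree: $\Ima(d_{E_{W_1}})=\Ima(d_{E_{W_2}})\subseteq H^0(L)$. By Proposition \ref{prop: stableEw}(c), $\Ima(d_{E_{W_i}})=W_i$, so $W_1=W_2$ as subspaces of $H^0(L)$. This proves injectivity. The only delicate point is that the images are taken inside the same $H^0(L)$. This is why I fix the identification $\det E_{W_i}=L$ and absorb the ambiguity into the scalar $\lambda$; it does not affect the image.

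For the second statement, $\Phi|_U$ is a morphism (by the previous proposition) from the irreducible quasi-projective variety $U$, which is open and dense in $\Gr(r+1,H^0(L))$, and it is injective. Hence the closure $Z$ of $\Phi(U)$ is irreducible of dimension $\dim\Gr(r+1,H^0(L))$. Since we work over $\bC$ (characteristic zero), an injective dominant morphism $U\to Z$ is generically finite of degree one, and therefore birational.

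I expect this last step to be the main obstacle. One needs $\Phi(U)$ to contain an open subset of $Z$ on which $\Phi$ is an isomorphism, and generic injectivity in characteristic zero gives exactly this: the function field extension is separable of degree equal to the cardinality of a general fibre, namely $1$. Shrinking $U$ if necessary, we get a subvariety of $\cM^s_H(r,L,\underline{c})$ birational to $\Gr(r+1,H^0(L))$.
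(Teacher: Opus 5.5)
Your proposal is correct and follows essentially the same route as the paper: you recover $W$ as $\Ima(d_{E_W})\subseteq H^0(L)$ via Proposition~\ref{prop: stableEw}(c), and then you deduce birationality of $U\to\overline{\Phi(U)}$ from injectivity in characteristic zero. Your check that $\Ima(d_E)$ depends only on the isomorphism class of $E$ (via the scalar $\lambda$ from $\det f$), and your degree-one function field argument, make explicit what the paper leaves implicit when it defines $d$ on $\Phi(U)$ and when it invokes generic smoothness.
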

\begin{proof}

	By 
	%Lemma \ref{lemmatecnico2} and 
	by Proposition \ref{prop: injectivitydE} have a map 
	$$d \colon \Phi(U)  \to \Gr(r+1,H^0(L))$$ 
	sending $E \mapsto \Ima(d_E)$, where $d_E$ is the determinant map of $E$.   By  Proposition \ref{prop: stableEw} we have
	that $d(\Phi(W)) = W$ for any $W \in U$. This implies that $\Phi$ is injective. In particular, by generic smoothness, the closure of $\Phi(U)$ is a variety birational to $\Gr(r+1,H^0(L)))$.
\end{proof}

As a immediate consequence, $\cM^s_H(r,L,\underline{c})$ has an irreducible component of dimension at least $(r+1)(d-g-r)$, by Remark \ref{REM:bound}.

%-------------------------------------------------------
%-------------------------------------------------------
%-------------------------------------------------------
%-------------------------------------------------------
%-------------------------------------------------------

\section{Some examples for surfaces}

In the previous section, we proved that, given an admissible collection $(X, L, H,r)$, then the moduli space $\mathcal M_H^s(r,L,\underline{c})$ contains a subvariety birational to the Grassmanian $\Gr(r+1, H^0(L))$. In this section, we will present some examples of such collections when $X$ is a smooth algebraic surface, denoted, from now on, by $S$. We will produce examples for every possible value of the Kodaira dimension. 
\smallskip

The admissible data $(S,L,H,r)$ will be presented with more details for the case of surfaces of general type and for surfaces of Kodaira dimension $0$; for the other cases, we will not report most of the computations since they are similar to previous ones. Moreover, for the case of $K3$ surfaces, we will make a finer analysis and obtain Lagrangian subvarieties of the moduli space of sheaves (with suitable invariants).

\subsection{Surfaces of general type}
Let us assume that $S$ is a minimal surface of general type with $K_S$ ample. In particular, under these assumptions, $S$ coincides with its canonical model. We also assume that $S$ admits a smooth irreducible curve $C$ in $|K_S|$. Notice that its genus is $g(C)=1+K_S^2\geq 2$.
\smallskip

We observe that $mK_S$ is ample for all $m\geq 1$. Nevertheless, it is not necessarily globally generated. 
\begin{remark}
    \label{REM:BombieriReider}
    As $S$ is a canonical model, then, by results of Bombieri and Reider (see \cite{Bom73} and \cite{Rei88}), one has that $mK_S$ is very ample (and so also globally generated) when
    \begin{equation}
		\label{EQ:Bound_m2_fede}
		m\geq 5\, \mbox{ if }\, K_S^2\leq 2\qquad \mbox{ or }\qquad m\geq 3 \, \mbox{ if }\, K_S^2\geq 3.
	\end{equation}
\end{remark}

\begin{remark}
\label{rem: m_geq_3_implies_A2}
If we assume $m\geq 3$, one has that $(m-1)K_S$ and $(m-2)K_S$ are ample. Hence, for any $j\geq 1$, one has
$$H^j(mK_S)=H^j(K_S+(m-1)K_S)=0\qquad H^j(mK_S-C))=H^j(K_S+(m-2)K_S)=0$$
by the Kodaira Vanishing Theorem. In particular, 
we have $H^1(mK_S-C)=0$, so the restriction map $\rho:H^0(mK_S)\to H^0(mK_S|_C)$ is surjective. Moreover we have
$$h^0(mK_S)=\chi(mK_S)=\chi(\cO_S)+\frac{m(m-1)}{2}K_S^2.$$
\end{remark}
We set $H:=K_S$ and $L:=mK_S$,   we want to find positive integers $m$ and $r\geq 2$ such that $(S, mK_S, K_S, r)$ is admissible and then Theorem \ref{thm:inj_mor_moduli} applies. 
\smallskip

We observe that property $A_1$ is automatically satisfied by our assumptions on $S$. Before investigating property $A_2$, let us study the numerical properties $A_3$. Let us study separately what are the constraints on $m$ and $r$ for which either $A_3(1)$ or $A_3(2)$ holds, since our construction cannot be carried out for all pairs $(r,m)$.
\medskip

\textbf{$A_3(1)$:} The condition  $\deg(L|_C)=rg(C)+1$ holds if and only if
\begin{equation}
	\label{eq: C=KS_case_fede}
	m K_S^2=r (K_S^2+1)+1.
\end{equation}
\begin{lemma}
	\label{LEM:m and r_fede}
	Property $A_3(1)$ holds if and only if $r$ and $m$ satisfy one of the following necessary conditions:
	\begin{description}
		\item[$K_S^2=1$] $r\geq 2$ and $m=2r+1$;
		\item[$K_S^2=2$] $r=2a-1$ and $m=r+a=3a-1$ with $a\geq 2$.    
		\item[$K_S^2\geq 3$] $r=a K_S^2-1$ and $m=r+a=a(K_S^2+1)-1$ with $a\geq 1$.
	\end{description}
\end{lemma}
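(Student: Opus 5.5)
Set $k:=K_S^2\geq 1$. The plan is to treat \eqref{eq: C=KS_case_fede} as a linear Diophantine equation in $(m,r)$ and solve it by reduction modulo $k$. Reading it modulo $k$ gives $r(k+1)+1\equiv r+1\equiv 0 \pmod{k}$, so $k$ must divide $r+1$. Write $r=ak-1$ with $a\geq 1$ an integer. Substituting back,
$$mk=(ak-1)(k+1)+1=ak^2+ak-k,$$
so $m=ak+a-1=a(k+1)-1=r+a$. Conversely, any pair $r=ak-1$, $m=a(k+1)-1$ clearly satisfies \eqref{eq: C=KS_case_fede}. Hence \eqref{eq: C=KS_case_fede} holds if and only if $(r,m)=(ak-1,\,a(k+1)-1)$ for some integer $a\geq1$.

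The three cases then come from imposing the standing requirement $r\geq 2$, i.e. $ak\geq 3$.

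If $k=1$, then $r=a-1$ and $m=2a-1=2r+1$; the condition $r\geq2$ is just $a\geq3$, which is the same as saying $r\geq 2$ and $m=2r+1$.

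If $k=2$, then $r=2a-1$ and $m=3a-1$; the condition $r\geq 2$ forces $a\geq 2$.

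If $k\geq3$, every $a\geq1$ already gives $r=ak-1\geq2$, so no further restriction is needed and $m=r+a=a(k+1)-1$.

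For the converse direction, recall that $g(C)=1+K_S^2$ and $\deg(L|_C)=mK_S\cdot K_S=mk$. Hence $A_3(1)$, i.e. $d=rg+1$, is literally equation \eqref{eq: C=KS_case_fede}, and each listed pair satisfies $A_3(1)$.

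There is no real obstacle here beyond this bookkeeping: the argument is elementary arithmetic. The only points needing care are the divisibility step, which uses that $k+1\equiv 1 \pmod{k}$, and translating the lower bound $r\geq 2$ into the correct lower bound on $a$ in each case.
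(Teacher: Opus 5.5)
Your proof is correct and follows the paper's argument: reduce the equation $mK_S^2=r(K_S^2+1)+1$ modulo $K_S^2$ to get $r=aK_S^2-1$, solve to get $m=r+a$, then translate $r\geq 2$ into the stated lower bound on $a$. The only difference is cosmetic: you treat $K_S^2=1$ inside the same parametrization, while the paper reads off $m=2r+1$ directly in that case.
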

\begin{proof}
	If $K_S^2=1$ the condition $m=2r+1$ follows directly from Equation \eqref{eq: C=KS_case_fede}. 
	\smallskip
	
	Assume now $K_S^2\geq 2$. Reducing Equation \eqref{eq: C=KS_case_fede} modulo $K_S^2$ one gets $r\equiv K_S^2-1 \mod K_S^2$, so that $r=aK_S^2-1$ for suitable $a\geq 1$. Then 
	\begin{equation}
		\label{EQ:mFROMr}
		m=\frac{1}{K_S^2}\left[r (K_S^2+1)+1\right]=\frac{1}{K_S^2}\left[(aK_S^2-1)(K_S^2+1)+1\right]=aK_S^2+(a-1)=r+a.
	\end{equation}
	If $K_S^2=2$ one has $r=2a-1$ which satisfy the constrain $r\geq 2$ only if $a\geq 2$, so we have to exclude the case $a=1$.
\end{proof}

$A_3(2):$ In this case, the condition is 
\begin{equation}
\label{EQ:A32_SGT}
r+K^2_S+2\leq mK_S^2\leq \min(2r, r+2K_S^2+2), \quad \makebox{and \ if} \quad mK_S^2=2r, \ C \makebox{ is \ not hyperlliptic}.
\end{equation}

For brevity, we consider the set 
$$\cS_{\bar{r}}=\bigcup_{r\geq \bar{r}}\{(r,m(r)),(r,m(r)+1)\,|\, r\equiv -2 \mod K_S^2\}\cup \{(r,\lceil m(r)\rceil)\,|\, r\not\equiv -2 \mod K_S^2\}$$
where we define $m(r)=1+\frac{r+2}{K_S^2}$.

\begin{lemma}
\label{lem: Propr_A3(2)_surf_gen_type}
    Condition $A_3(2)$ holds if and only if the pair $(r,m)$ falls in one of the following cases
    \begin{equation*}
    \begin{array}{c |c|c}
        K_S^2 & \mbox{sporadic pairs} & \mbox{standard pairs}\\[0.1cm] \hline \hline
        1 & (4,7) & \cS_5\\ \hline
        2 & (4,4)^{\dagger},(5,5)^{\dagger},(6,6)^{\dagger},(6,5) & \cS_7\\ \hline
        3 & (6,4)^{\dagger},(7,4) & \cS_8\\ \hline
        4 & (6,3)^{\dagger},(8,4)^{\dagger},(10,5)^{\dagger},(9,4),(10,4) & \cS_{11} \\ \hline    
        K_S^2\geq 5 \mbox{ odd} & (4,2K_S^2)^{\dagger}, \{(r,3) \,\,|\,\, \lceil 3K_S^2/2\rceil<r\leq2K_S^2-2 \} & \cS_{2K_S^2+1} \\ \hline
        K_S^2\geq 5 \mbox{ even} & (3,3K_S^2/2)^{\dagger}, (4,2K_S^2)^{\dagger}, \{(r,3) \,\,|\,\,  3K_S^2/2 <r\leq2K_S^2-2 \} & \cS_{2K_S^2+1}    \end{array}
    \end{equation*}
    For those pairs marked with the symbol $\dagger$, we also require that the general curve $C\in |H|$ is not hyperelliptic. 
\end{lemma}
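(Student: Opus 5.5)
The plan is to turn $A_3(2)$ into explicit inequalities in $(r,m)$ and then split into a "large $r$" regime, which produces the standard pairs, and a finite "small $r$" regime, which produces the sporadic pairs.

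\textbf{Translating $A_3(2)$.} Set $k:=K_S^2\geq 1$. Since $C\in|K_S|$, we have $g=g(C)=k+1$ and $d=\deg(mK_S|_C)=mk$. Condition $A_3(2)$ then reads
$$r+k+2\leq mk\leq \min(2r,\,r+2k+2),$$
together with the non-hyperellipticity requirement when $mk=2r$; this is exactly \eqref{EQ:A32_SGT}. Dividing by $k$, the lower bound becomes $m\geq 1+\frac{r+2}{k}=m(r)$. The second upper bound becomes $m\leq \frac{r+2k+2}{k}=m(r)+1$. The first upper bound becomes $m\leq \frac{2r}{k}$. The whole task is therefore to count the integers $m$ in the interval $[m(r),\min(2r/k,\,m(r)+1)]$ for every $r\geq 2$.

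\textbf{Large $r$.} If $r\geq 2k+2$, then $2r\geq r+2k+2$, so the minimum is $r+2k+2$ and the admissible interval is $[m(r),m(r)+1]$, of length exactly $1$.
\begin{itemize}
\item If $k\mid r+2$, i.e. $r\equiv -2 \bmod k$, then $m(r)$ is an integer and both $m(r)$ and $m(r)+1$ occur.
\item Otherwise the only integer is $\lceil m(r)\rceil$.
\end{itemize}
This is precisely the shape of $\cS_{\bar r}$. In this regime $mk<2r$ except at the boundary case $r=2k+2$ with $mk=2r$, so the hyperellipticity clause only affects that boundary, which I will check by hand.

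\textbf{Lowering the threshold.} Next I determine the smallest $\bar r$ for which $\cS_{\bar r}$ still describes all solutions. For $r\leq 2k+1$ the interval is $[m(r),2r/k]$. Its integer points agree with those of $[m(r),m(r)+1]$ exactly when no integer lies in $(2r/k,\,m(r)+1]$. For example, at $r=2k+1$ one compares $4k+2$ and $4k+3$, and these can only be separated by a multiple of $k$ when $k\mid 3$. For $k\geq 5$ this gives $\bar r=2k+1$. For $k\leq 4$ one pushes $\bar r$ up to the values $5,7,8,11$ in the table.

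\textbf{Small $r$.} For $r<\bar r$ the condition is nonempty only if $r+k+2\leq 2r$, i.e. $r\geq k+2$. So only a finite range of $r$ remains, namely $\max(2,k+2)\leq r<\bar r$, and in it I list the integers $m$ with $r+k+2\leq mk\leq 2r$.
\begin{itemize}
\item For $k\leq 4$ this is a direct finite check giving the listed sporadic pairs.
\item For $k\geq 5$ the window $[\frac{r+k+2}{k},\frac{2r}{k}]$ has length less than $2$. For $r$ up to about $2k$ its only possible integer is $m=2$ or $m=3$: $m=2$ forces $r\leq k-2$, which contradicts $r\geq k+2$, and $m=3$ gives $\lceil 3k/2\rceil\leq r\leq 2k-2$.
\end{itemize}
Finally, every solution with $mk=2r$ is marked with $\dagger$.

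I expect the main obstacle to be the bookkeeping at the boundary between the two regimes: pinning down the exact $\bar r$ for each small $k$, the parity issue at $r=3k/2$, and whether the case $mk=2r$ is strict or daggered. I would settle these by tabulating $r$ near $\frac{3k}{2}$, $2k$ and $2k+2$ explicitly, rather than trying to argue uniformly.
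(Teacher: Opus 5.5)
Your plan matches the paper's proof, which is a direct analysis of \eqref{EQ:A32_SGT}. However, your closing rule ``every solution with $mk=2r$ is marked with $\dagger$'' is false for $K_S^2=1$. This is exactly the one non-numerical point the paper's proof has to make.

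For $k=1$ the curve $C\in|K_S|$ has genus $g=k+1=2$, and every genus-$2$ curve is hyperelliptic. Your finite check $r+3\leq m\leq\min(2r,r+4)$ gives $(3,6)$, $(4,7)$, $(4,8)$ for $r\leq 4$. The two pairs with $d=2r$, namely $(3,6)$ and $(4,8)$, can never satisfy $A_3(2)$, so they must be discarded rather than daggered. Note that $(4,8)$ is precisely your ``boundary case'' $r=2k+2$. Without this observation your procedure outputs $(3,6)^{\dagger},(4,7),(4,8)^{\dagger}$ together with $\cS_4$, which is not the table, so the ``if and only if'' is not established for $K_S^2=1$. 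Removing $(4,8)$ is also what makes the threshold $5$ rather than $4$.

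More generally, the thresholds $5,7,8,11,2k+1$ are the smallest $\bar r$ beyond which $\cS_{\bar r}$ is exact \emph{and} contains no pair with $d=2r$; compare $(6,6)^{\dagger}$, $(10,5)^{\dagger}$, $(2k,4)^{\dagger}$. Your interval-comparison criterion alone does not detect this: for $k=4$ it already stops at $\bar r=8$, and for $k\geq 5$ at $2k$. This is harmless only because you then take the table's thresholds and check every $r<\bar r$ by hand.

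For $k\geq 5$, state $(2k,4)^{\dagger}$ explicitly. Your restriction to $m\in\{2,3\}$ ``for $r$ up to about $2k$'' misses it, and it is the reason the standard range starts at $2k+1$.

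Your other computations for $k\geq 5$ are correct, but they will not match the printed table literally. The daggered pairs are $(3k/2,3)$ (for $k$ even) and $(2k,4)$; the table writes them with coordinates swapped, as $(3,3K_S^2/2)$ and $(4,2K_S^2)$. For odd $k$ your range $\lceil 3k/2\rceil\leq r\leq 2k-2$ is the correct one, whereas the table's strict inequality omits, e.g., $(8,3)$ for $K_S^2=5$. There $g=6$, $d=15$, and $15\leq 15\leq\min(16,20)$ with $d\neq 2r$, so the pair is admissible. Treat these as misprints in the statement rather than trying to reconcile your answer with them.
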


\begin{proof}
The pairs in the table are obtained by analysing the condition \eqref{EQ:A32_SGT}. When $K_S^2=1$, the pairs $(3,6)$ and $(4,8)$ were excluded since, the general element of $|K_S|$ is hyperelliptic.
\end{proof}

We can finally state and prove
\begin{theorem}
	\label{THM:MainTHM for SGT}
	Let $S$ be a minimal surface of general type with ample canonical class. Assume that the pair $(r,m)$ satisfies either the condition of Lemma \ref{LEM:m and r_fede} or Lemma \ref{lem: Propr_A3(2)_surf_gen_type} and that the canonical linear system $\vert K_S\vert$ contains a smooth irreducible curve. Then, with the possible exception of the case $K_S^2=2$ with $(r,m)=(4,4)$, $(S,mK_S,K_S,r)$ is admissible and there exists a subvariety of $\cM_{K_S}^s(r,mK_S, m^2K^2_S)$ birational to $\Gr(r+1,H^0(mK_S))$. 
\end{theorem}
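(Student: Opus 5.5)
The plan is to verify that $(S,mK_S,K_S,r)$ satisfies the three conditions of Definition \ref{DEF:AdmissibleTriple}, and then apply Theorem \ref{thm:inj_mor_moduli}. For the moduli space, note that $\underline{c}=(c_1(L),c_1(L)^2)$ and that $c_1(L)^2=m^2K_S^2$, which is why the moduli space in the statement is $\cM^s_{K_S}(r,mK_S,m^2K_S^2)$.

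\textbf{Conditions $A_1$ and $A_3$.} Condition $A_1$ holds immediately. Indeed, $H=K_S$ is ample by assumption, and $|K_S|$ contains a smooth irreducible curve $C$, which is trivially a complete intersection of divisors in $|H|$. Its genus is $g(C)=1+K_S^2\geq 2$ by adjunction. Condition $A_3$ is exactly what Lemma \ref{LEM:m and r_fede} and Lemma \ref{lem: Propr_A3(2)_surf_gen_type} encode, because $d=\deg(L|_C)=mK_S^2$ and $g=K_S^2+1$.
\begin{itemize}
\item Equation \eqref{eq: C=KS_case_fede} is $A_3(1)$.
\item Equation \eqref{EQ:A32_SGT} is $A_3(2)$.
\item For the pairs marked with $\dagger$, we have $mK_S^2=2r$. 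There the non-hyperellipticity of $C$ is imposed by hypothesis, taking $C$ to be a general smooth member of $|K_S|$.
\end{itemize}

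\textbf{Condition $A_2$.} Since $K_S$ is ample, $L=mK_S$ is ample, hence big and nef. It is also finite onto its image, so $\dim\varphi_L(S)=2\leq r$. Surjectivity of $\rho\colon H^0(mK_S)\to H^0(mK_S|_C)$ follows from Remark \ref{rem: m_geq_3_implies_A2} as soon as $m\geq 3$, since then $H^1(mK_S-C)=0$ by Kodaira vanishing. The remaining requirement is that $mK_S$ be globally generated, and I would get it from Remark \ref{REM:BombieriReider}. This needs $m\geq 3$ when $K_S^2\geq 3$, and $m\geq 5$ when $K_S^2\leq 2$.

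\textbf{Checking the bounds on $m$, table by table.} This is the main (if elementary) step: every admissible pair must have $m$ large enough.
\begin{itemize}
\item \emph{$K_S^2=1$.} In case $A_3(1)$ we have $m=2r+1\geq 5$. The sporadic pair $(4,7)$ has $m=7$. Pairs in $\cS_5$ have $m\geq\lceil m(5)\rceil=8$.
\item \emph{$K_S^2=2$.} In case $A_3(1)$ we have $m=3a-1\geq 5$ because $a\geq 2$. The sporadic pairs $(5,5),(6,6),(6,5)$ have $m\geq 5$. Pairs in $\cS_7$ have $m\geq\lceil 1+9/2\rceil=6$.
\item \emph{$K_S^2\geq 3$.} In case $A_3(1)$ we have $m=a(K_S^2+1)-1\geq 3$. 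All sporadic pairs have $m\geq 3$. Pairs in $\cS_{\bar r}$ have $m\geq 1+(\bar r+2)/K_S^2\geq 3$ with the listed $\bar r$.
\end{itemize}
The only pair escaping the bound is $K_S^2=2$ with $(r,m)=(4,4)$. Here Remark \ref{REM:BombieriReider} does not guarantee global generation of $4K_S$, which is precisely why this case is excluded in the statement.

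\textbf{Conclusion.} With $A_1,A_2,A_3$ verified, the collection $(S,mK_S,K_S,r)$ is admissible. Theorem \ref{thm:inj_mor_moduli} then gives an injective morphism $\Phi\colon U\to\cM^s_{K_S}(r,mK_S,m^2K_S^2)$, whose image closure is birational to $\Gr(r+1,H^0(mK_S))$.
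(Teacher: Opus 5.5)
Your proposal is correct and follows essentially the same route as the paper. $A_1$ is immediate from the hypotheses, $A_3$ is exactly the content of Lemmas \ref{LEM:m and r_fede} and \ref{lem: Propr_A3(2)_surf_gen_type}, and $A_2$ follows from global generation of $mK_S$ via Remark \ref{REM:BombieriReider} together with the Kodaira-vanishing surjectivity of Remark \ref{rem: m_geq_3_implies_A2} for $m\geq 3$, after which Theorem \ref{thm:inj_mor_moduli} applies; your table-by-table check that every listed pair except $(4,4)$ for $K_S^2=2$ meets the Bombieri--Reider bound just makes explicit the one-line assertion in the paper's proof.
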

\begin{proof}
    As remarked above, property $A_1$ is automatically satisfied by assumptions on $S$. Instead, property $A_3$ holds by Lemmas \ref{LEM:m and r_fede} or \ref{lem: Propr_A3(2)_surf_gen_type}. Finally, for all the above pairs with the exception, for $K_S^2=2$, of the pair $(4,4)$, the Inequalities  
	\eqref{EQ:Bound_m2_fede} in Remark \ref{REM:BombieriReider} hold so $mK_S$ is very ample and $m\geq 3$. In particular, condition $A_2$ is automatically satisfied from Remark \ref{rem: m_geq_3_implies_A2}. Then the claim follows by applying Theorem \ref{thm:inj_mor_moduli}.
\end{proof}
In order to apply Theorem \ref{THM:MainTHM for SGT} we need to check whether $|K_S|$ actually contains a smooth and irreducible element. Unfortunately, the existence of such a curve really depends on the family of surfaces we are considering and not only on numerical data on $S$.
\smallskip

If we know that $\vert K_S\vert$ is base-point free, then the problem is solved by Bertini's Theorem. However, the assumption we need, namely to pick up a smooth irreducible curve $C$ in $|K_S|$, is weaker than to require $|K_S|$ base-point free. 

% Unluckily, there are no numerical conditions on $K_S^2$ which guarantee that the canonical map $\Phi_{K_S}$ is a morphism\footnote{On the contrary, this happens for the bicanonical map, which is always a morphism if $K^2_S\geq 5$, for example.}. Indeed, in \cite{Pi12}, the author constructed minimal surfaces of general type $S$ having non-trivial fixed part of $|K_S|$ and $K^2_S$ arbitrarily high. 
\smallskip

Indeed, there are several examples of minimal surfaces of general type with ample canonical system with base points, but with a smooth irreducible canonical curve. For example, if $K_S^2=1$, one has Todorov's surfaces (see \cite{Tod80}) for which $|K_S|$ has fixed part and some surfaces studied by Horikawa and Kodaira (see \cite{Hor76}) for which $|K_S|$ has a single (simple) base point. 

\begin{remark}
	%Let us consider a minimal surface $S$ of general type with an ample canonical class. 
	By Reider's Theorem (see \cite{Rei88}), the bicanonical map is always a morphism if $K^2_S\geq 5$. This implies that the general bicanonical curve $C$ is smooth and irreducible by Bertini. 
	
	Thus, it is natural to construct other examples by setting $H:=2K_S$ and $L:=mK_S$ as property $A_1$ always holds. Using similar computations to satisfy condition $A_3(1)$ as in the previous case, we obtain the following theorem. 
\end{remark}
\begin{theorem}
	\label{THM:MainTHM2Ks for SGT}
	Let $S$ be a minimal surface of general type with a ample canonical class and $K_S^2\geq 6$, with $K_S^2$ even number. Given an even number $a\geq 2$, let us consider a pair $(r,m)$ such that 
	$$r=a K_S^2-1\qquad\mbox{ and }\qquad m=\frac{a(3K_S^2+1)-3}{2}.$$
	Then $(S,mK_S,2K_S,r)$ is admissible and there exists a subvariety of $\cM_{2K_S}^s(r,mK_S, m^2K^2_S)$ birational to $\Gr(r+1,H^0(mK_S))$. 
\end{theorem}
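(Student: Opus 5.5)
The plan is to verify directly that $(S,mK_S,2K_S,r)$ satisfies $A_1$, $A_2$ and $A_3(1)$ of Definition \ref{DEF:AdmissibleTriple}, and then to invoke Theorem \ref{thm:inj_mor_moduli}. The Chern data are as in the statement because $\underline{c}=(c_1(L),c_1(L)^2)$ with $c_1(L)^2=m^2K_S^2$. This is the same strategy as in Theorem \ref{THM:MainTHM for SGT}, with $H=2K_S$ in place of $K_S$.

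\emph{Property $A_1$.} Since $K_S^2\geq 6\geq 5$, Reider's Theorem (see the Remark preceding the statement) makes $|2K_S|$ base point free. By Bertini, a general $C\in|2K_S|$ is therefore smooth and irreducible, and it is trivially a complete intersection of divisors in $|H|$. Adjunction gives $2g(C)-2=C\cdot(C+K_S)=6K_S^2$, so $g:=g(C)=3K_S^2+1\geq 2$.

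\emph{Property $A_3(1)$.} We have $d=\deg(L|_C)=mK_S\cdot 2K_S=2mK_S^2$. On the other side,
$$rg+1=(aK_S^2-1)(3K_S^2+1)+1=K_S^2\bigl(a(3K_S^2+1)-3\bigr).$$
Hence $d=rg+1$ is exactly equivalent to $2m=a(3K_S^2+1)-3$, which is the defining formula for $m$. The one step I expect to need care is the integrality of $m$. When $K_S^2$ is even, $3K_S^2+1$ is odd, so $a(3K_S^2+1)-3$ is even precisely when $a$ is odd. I would therefore first check which parity of $a$ actually yields an integer $m$, and adjust the parity hypothesis accordingly (it seems $a$ odd is what is needed). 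The arithmetic identity itself is independent of this. In every admissible case we also have $r=aK_S^2-1\geq 2$.

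\emph{Property $A_2$.} Since $a\geq 2$ and $K_S^2\geq 6$, we get $m\geq (2\cdot 19-3)/2>4$, so in particular $m\geq 4$. By Remark \ref{REM:BombieriReider}, $mK_S$ is then very ample, hence big, nef and globally generated. Moreover $\dim\varphi_L(S)=2\leq r$. For the surjectivity of $\rho\colon H^0(mK_S)\to H^0(mK_S|_C)$, use the restriction sequence $0\to (m-2)K_S\to mK_S\to mK_S|_C\to 0$. It suffices to have
$$H^1\bigl((m-2)K_S\bigr)=H^1\bigl(K_S+(m-3)K_S\bigr)=0,$$
which follows from Kodaira vanishing because $(m-3)K_S$ is ample for $m\geq 4$; this is the analogue of Remark \ref{rem: m_geq_3_implies_A2}.

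With all three properties verified, the collection is admissible, and Theorem \ref{thm:inj_mor_moduli} gives the subvariety of $\cM^s_{2K_S}(r,mK_S,m^2K_S^2)$ birational to $\Gr(r+1,H^0(mK_S))$.
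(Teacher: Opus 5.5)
Your argument is correct and is the route the paper intends. The paper gives no separate proof, only ``similar computations'' to Theorem~\ref{THM:MainTHM for SGT}: Reider plus Bertini for $A_1$; the identity $rg+1=K_S^2\bigl(a(3K_S^2+1)-3\bigr)$ with $g=3K_S^2+1$ for $A_3(1)$; very ampleness of $mK_S$ plus Kodaira vanishing of $H^1((m-2)K_S)$ for $A_2$; then Theorem~\ref{thm:inj_mor_moduli}.

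Your parity observation is also right and can be stated firmly rather than tentatively. For $K_S^2$ even, $m$ is an integer if and only if $a$ is odd, so ``$a$ even'' in the statement is a misprint for ``$a$ odd''. With that correction your bound $m\geq 4$ still holds, since already $a=1$ gives $m=(3K_S^2-2)/2\geq 8$.
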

We point out that one can also try to get constrains on $r$ and $m$ in order to satisfy condition $A_3(2)$ instead of $A_3(1)$. In this case, we would get a similar theorem such as Theorem \ref{THM:MainTHM2Ks for SGT}.

\subsection{Surfaces with Kodaira dimension $0$} 
Let us assume now that $S$ is a smooth algebraic surface with $K_S\equiv_{num}0$. Let us consider a very ample line bundle $H$ on $S$. The assumption on the very ampleness of $H$ puts a lower bound on the possible values of $H^2$, depending on the class of surfaces we are considering. For example, if $S$ is a K3, one has $H^2\geq 4$ with equality realised if and only if $S$ is a smooth quartic in $\bP^3$. If $S$ is not a K3, one necessarily has $H^2\geq 10$ (see \cite{Mum}, \cite{BPV}, \cite{CDL}, for example). We recall, moreover, that $H^2$ is even since $K_S$ is numerically trivial.
\smallskip

By assumption, a general curve $C \in \vert H \vert $ is smooth and irreducible of genus $g(C)= 1+\frac{1}{2}H^2 \geq 2$, hence property $A_1$ is satisfied.
We set  $L:=mH$, with $m \geq 2$. By Kodaira-vanishing, we also have that property $A_2$ holds.
\smallskip

Let's consider condition $A_3(1)$. As we set $L=mH$, the condition can be rewritten as 
\begin{equation}
	\label{eq: C=KS_Ksnumtrivial}
	mH^2= r\left(1+ \frac{1}{2}H^2\right)+1.
\end{equation}
It is easy to see that this condition implies that $H^2$ has to be a multiple of $4$.
\smallskip

With similar computations as the ones done for surfaces of general type, we obtain the following result.

\begin{lemma}
	\label{LEM:CondA31_KSnumtrivial}
    For brevity, we set $h:=H^2/4$ with $H$ very ample as above. The numerical condition $A_3(1)$ holds if and only if $r$ and $m$ can be written, for a given natural number $a\geq 1$ as follows: 
	\[
	\left.\begin{aligned}
		r &= 4a+1      &\qquad \mbox{and}\qquad& m = 3a+1,&      &\qquad \makebox{if } h=1, \\[3pt]
		r &= 4ha-2h-1 &\qquad \mbox{and}\qquad& m = (1+2h)a-h-1,& &\qquad \makebox{if } h\geq 2.
	\end{aligned}\right.
	\]
\end{lemma}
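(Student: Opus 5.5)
The plan is to treat Equation \eqref{eq: C=KS_Ksnumtrivial} as a linear Diophantine equation in the unknowns $r$ and $m$, with $H^2$ fixed, and solve it by congruences. This is the same strategy as in the proof of Lemma \ref{LEM:m and r_fede}.

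First, since $K_S\equiv_{num}0$, adjunction gives that $H^2$ is even. I write $H^2=2k$ with $k\geq 2$, which holds because $H$ is very ample and hence $H^2\geq 4$. Then \eqref{eq: C=KS_Ksnumtrivial} becomes
$$2km=r(k+1)+1.$$
Reducing modulo $k$ gives $r\equiv -1 \pmod k$, so $r=bk-1$ for some integer $b\geq 1$. Substituting back and dividing by $k$ yields
$$2m=b(k+1)-1.$$
Hence $b(k+1)$ must be odd. This forces $b$ to be odd and $k$ to be even. Writing $k=2h$ proves the claim made before the lemma that $4\mid H^2$, with $h=H^2/4\geq 1$.

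Next I write $b=2a-1$ with $a\geq 1$. This gives
$$r=4ha-2h-1\qquad\mbox{and}\qquad m=\frac{(2a-1)(2h+1)-1}{2}=(1+2h)a-h-1.$$
Conversely, any such pair satisfies \eqref{eq: C=KS_Ksnumtrivial} by direct substitution, so the condition is also sufficient.

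The only remaining point, and the one step needing care, is imposing the standing constraints $r\geq 2$ and $m\geq 2$ of the construction.
\begin{itemize}
\item For $h\geq 2$, already $a=1$ gives $r=2h-1\geq 3$ and $m=h\geq 2$, and both quantities increase with $a$. So every $a\geq 1$ is allowed.
\item For $h=1$, the formulas read $r=4a-3$ and $m=3a-2$. The value $a=1$ gives $r=1$ and must be discarded. Shifting the index by $a\mapsto a+1$ then gives $r=4a+1$ and $m=3a+1$ with $a\geq 1$, which is exactly the first line of the statement.
\end{itemize}
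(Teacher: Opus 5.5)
Your proof is correct and follows the same route the paper has in mind. The paper only says ``similar computations'' to Lemma \ref{LEM:m and r_fede}: reduce Equation \eqref{eq: C=KS_Ksnumtrivial} modulo $H^2/2$, then use parity to force $4\mid H^2$ and $b$ odd, and reindex for $h=1$ to discard $r=1$. Your explicit check of the constraints $r,m\geq 2$ supplies exactly the details the paper leaves implicit.
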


Similarly, the condition $A_3(2)$ can be rewritten as 
\begin{equation}
\label{EQ:A32_KSnumtrivial}
r+\frac{1}{2}H^2+2\leq mH^2\leq \min(2r, r+H^2+2), \quad \makebox{and \ if} \quad mH^2=2r, \ C \makebox{ is \ not hyperlliptic}.
\end{equation}

We define an auxiliary set in order to describe in a more compact way the set of solutions. For a given $h\geq 2$ consider the following all $m,h\geq 2$ set
$$a_m=h(2m-2)-2\qquad b_m=a_m+h = h(2m-1)-2$$
and observe that $a_{m+1}-b_m=h$ so that the intervals $[a_m,b_m]$ are all disjoint (and exactly $h+1$ integer can be found in any of these intervals). In analogy with what we have done for the case of surfaces of general type, we consider the set 
$$\cT_{\bar{m}}=\bigcup_{m\geq \bar{m}}\big([a_m,b_m]\cap \bZ\big)\times \{m\}.$$

\begin{lemma}
	\label{LEM:CondA32_KSnumtrivial}
    For brevity, we set $h:=H^2/2$ where $H$ is a very ample divisor. Then, the numerical condition $A_3(2)$ holds if and only if the pair $(r,m)$ falls in one of the following cases:
    \begin{equation*}
    \begin{array}{c |c|c}
        h & \mbox{sporadic pairs} & \mbox{standard pairs}\\[0.1cm] \hline \hline
        2 & (4,2),(6,3),(7,3),(8,3) & \cT_4\\ \hline
        3 & (6,2),(7,2) & \cT_3\\ \hline
        4 & (8,2),(9,2),(10,2)  & \cT_3\\ \hline 
        \geq 5 & (2h,2)^{\dagger},(2h+1,2), ([2h+2,b_2]\cap \bZ)\times \{2\}  & \cT_3\\
        \end{array}
    \end{equation*}
    For those pairs marked with the symbol $\dagger$, if $K_S$ is not trivial, we also require that the general curve $C\in |H|$ is not hyperelliptic. 
\end{lemma}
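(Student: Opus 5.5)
The plan is to translate condition $A_3(2)$ into explicit linear inequalities in $(r,m)$ and then solve them case by case in $h$.

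\textbf{Step 1: the inequalities.} For a general $C\in|H|$ we have $g=g(C)=1+\frac{1}{2}H^2=h+1$ and $d=\deg(L|_C)=mH^2=2hm$. Substituting into $r+g+1\leq d\leq\min(2r,r+2g)$ gives the three inequalities
$$r\leq 2hm-h-2,\qquad r\geq hm,\qquad r\geq 2hm-2h-2.$$
So $A_3(2)$ holds exactly when
$$\max(hm,\,2hm-2h-2)\leq r\leq 2hm-h-2,$$
together with the requirement that $C$ is not hyperelliptic when $r=hm$. The standing constraints $m\geq 2$ (since $L=mH$) and $r\geq 2$ are kept throughout.

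\textbf{Step 2: which lower bound is active.} One has $2hm-2h-2\geq hm$ if and only if $h(m-2)\geq 2$. Since $h\geq 2$, this always holds for $m\geq 3$. In that range the admissible $r$ form exactly the interval $[a_m,b_m]\cap\bZ$, with
$$a_m=h(2m-2)-2,\qquad b_m=h(2m-1)-2,$$
which yields the standard pairs $\cT_3$. For $h=2$ the tabulated pairs $(6,3),(7,3),(8,3)$ are precisely $[a_3,b_3]\times\{3\}$, so that row of the table is just a regrouping of $\cT_3$.

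For $m=2$ the interval becomes $[2h,3h-2]$, with $b_2=3h-2$. Specialising gives:
\begin{itemize}
\item $h=2$: $\{4\}$;
\item $h=3$: $\{6,7\}$;
\item $h=4$: $\{8,9,10\}$;
\item $h\geq 5$: $\{2h,2h+1\}\cup([2h+2,b_2]\cap\bZ)$.
\end{itemize}
These are the sporadic pairs in the table.

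\textbf{Step 3: the hyperelliptic clause (the main obstacle).} The clause only matters when $d=2r$, i.e. $r=hm$.
\begin{itemize}
\item For $m=2$ this is the pair $(2h,2)$.
\item For $m\geq 3$, $r=hm$ lies in the interval only if $hm\geq a_m$, i.e. $h(m-2)\leq 2$. This forces $h=2$, $m=3$, giving the pair $(6,3)$.
\end{itemize}

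To see why these pairs need no dagger when $h\leq 4$: then $H^2\leq 8<10$. By the bound recalled before Lemma \ref{LEM:CondA31_KSnumtrivial}, a very ample $H$ with $H^2<10$ on a surface with $K_S\equiv_{num}0$ forces $S$ to be a K3 surface. On a K3, the smooth curves of a very ample linear system are canonically embedded by $\varphi_H$, hence are non-hyperelliptic (Saint-Donat). So the pairs $(4,2)$, $(6,3)$, $(6,2)$ and $(8,2)$ need no extra hypothesis, and more generally, whenever $K_S$ is trivial the clause is automatic.

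For $h\geq 5$, a non-K3 surface is possible, so the condition is stated explicitly as the dagger on $(2h,2)$ when $K_S$ is not trivial. I expect this step to need the most care: it rests on the classification-type bound $H^2\geq 10$ for non-K3 surfaces and on Saint-Donat's result, rather than on numerics alone.

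The converse direction, that every listed pair satisfies the inequalities of Step 1, is immediate from the construction.
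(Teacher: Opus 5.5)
Your proposal is correct and follows the paper's route. You reduce $A_3(2)$ to $\max(hm,2hm-2h-2)\le r\le 2hm-h-2$, isolate the $d=2r$ pairs $(2h,2)$ and $(6,3)$, and use $H^2\ge 10$ off K3 surfaces so that the hyperelliptic clause only matters for $h\ge 5$ with $K_S$ non-trivial. Your explicit solution of the inequalities is more detailed than the paper's.

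The one point to tighten is your claim that the clause is automatic whenever $K_S$ is trivial. Saint-Donat and the surjectivity of $H^0(H)\to H^0(K_C)$ are K3-specific, and surjectivity fails for abelian surfaces, where $h^1(\cO_S)=2$. The paper's argument covers all cases at once: adjunction gives $K_C=H|_C$, so the embedding $\varphi_{|H|}|_C$ is given by a linear subsystem of $|K_C|$. Hence $|K_C|$ is very ample and $C$ is not hyperelliptic.
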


\begin{proof}
The pairs are obtained by analysing the condition \eqref{EQ:A32_KSnumtrivial}. The first value of $h$ to be considered is $2$ since the minimum value of $H^2=2h$ for a very ample divisor on a surface with numerically trivial canonical bundle is $4$. The condition $mH^2=2r$ (which is the case for which we need to check whether the general element of $|H|$ is not hyperelliptic) is satisfied only by the pairs
$$(6,3) \mbox{ for } h=2\quad \mbox{ and }\quad  (2h,2) \mbox{ for } h\geq 2.$$
We claim that the only possible cases, among those, for which one could have that the general element of $|H|$ is hyperelliptic, appear at most for $h\geq 5$ and when $K_S$ is not trivial.
Indeed, if $K_S$ is trivial and if $C$ is a smooth element in $|H|$, the canonical divisor of $C$ is given by $K_C=H|_C$. This implies that the restriction of the embedding $\varphi_{|H|}$ induces an embedding of $C$ given by a subsystem of the canonical system. Then, $C$ cannot be hyperelliptic when $K_S$ is trivial.
In order to conclude the proof, it is enough to remember, as recalled at the beginning of the subsection, that $h\geq 5$ if $S$ is not a K3.
\end{proof}

\begin{theorem}
	\label{THM:MainTHM2Ks for KodDim0}
	Let us consider a surface $S$ with $K_S\equiv_{num} 0$ and let $H$ be a very ample line bundle.  Assume that $(r,m)$ satisfies either the conditions  of Lemma \ref{LEM:CondA31_KSnumtrivial} and \ref{LEM:CondA32_KSnumtrivial}. 
	Then $(S,mH,H,r)$ is admissible and there exists a subvariety of $\cM_{H}^s(r,mH,(mH)^2)$ birational to $\Gr(r+1,H^0(mH))$. 
\end{theorem}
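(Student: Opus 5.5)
The plan is to verify that $(S,mH,H,r)$ satisfies $A_1$, $A_2$ and $A_3$ of Definition \ref{DEF:AdmissibleTriple}, and then invoke Theorem \ref{thm:inj_mor_moduli}. The Chern class datum $\underline{c}=(c_1(mH),(mH)^2)$ is then identified with $c_2=(mH)^2$ via Lemma \ref{lemmatecnico2}.

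\textbf{Condition $A_1$.} Since $H$ is very ample, Bertini gives a smooth irreducible $C\in|H|$. By adjunction and $K_S\equiv_{num}0$, its genus is $g=1+\frac12H^2$. The lower bound $H^2\geq 4$ recalled at the start of the subsection then gives $g\geq 3\geq 2$.

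\textbf{Condition $A_2$.} The line bundle $L=mH$ with $m\geq 2$ is very ample, hence globally generated, ample, and in particular big and nef. Since $\varphi_L$ is an embedding, $\dim\varphi_L(S)=2\leq r$ because $r\geq 2$ in all cases of the two lemmas.

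For surjectivity of $\rho$ we use the sequence $0\to (m-1)H\to mH\to mH|_C\to 0$. It suffices that $H^1((m-1)H)=0$. Write $(m-1)H=K_S+\bigl((m-1)H-K_S\bigr)$. Since $K_S\equiv_{num}0$, the class $(m-1)H-K_S$ is numerically equivalent to $(m-1)H$, which is ample for $m\geq 2$. Kodaira vanishing, in its numerical-class form, then gives the vanishing. This is the step needing a little care when $K_S$ is torsion but nontrivial (Enriques, bielliptic), but the argument above covers it uniformly.

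\textbf{Condition $A_3$.} Here $d=\deg(mH|_C)=mH^2$. If $(r,m)$ is as in Lemma \ref{LEM:CondA31_KSnumtrivial}, then substituting into \eqref{eq: C=KS_Ksnumtrivial} shows $d=rg+1$, so $A_3(1)$ holds. If $(r,m)$ is as in Lemma \ref{LEM:CondA32_KSnumtrivial}, the pair satisfies \eqref{EQ:A32_KSnumtrivial}, which is exactly $A_3(2)$ with $g=1+h$.

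The only subtle point, and what I expect to be the main obstacle, is the non-hyperelliptic requirement when $d=2r$. The proof of Lemma \ref{LEM:CondA32_KSnumtrivial} handles it in two ways. When $K_S$ is trivial, $K_C=H|_C$ embeds $C$ canonically, so $C$ is not hyperelliptic. Otherwise, the $\dagger$ hypothesis is assumed in the statement, and I would read the theorem as including it.

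With admissibility established, Theorem \ref{thm:inj_mor_moduli} gives an injective morphism $\Phi\colon U\to\cM^s_H(r,mH,(mH)^2)$. The closure of its image is birational to $\Gr(r+1,H^0(mH))$.
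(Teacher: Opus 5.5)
Your proposal is correct and follows essentially the paper's argument. You get $A_1$ from Bertini and adjunction, $A_2$ from Kodaira vanishing applied to $(m-1)H-K_S$, and $A_3$ from the two numerical lemmas, with the non-hyperelliptic requirement handled exactly as in the proof of Lemma \ref{LEM:CondA32_KSnumtrivial}; you then conclude with Theorem \ref{thm:inj_mor_moduli}. Your remark that $(m-1)H-K_S$ is ample because it is numerically equivalent to $(m-1)H$, which covers the Enriques and bielliptic cases, usefully spells out what the paper compresses into ``by Kodaira vanishing''.
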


It is actually useful to be more precise about the subvariety of $\cM_{H}^s(r,L,L^2)$ which is birational to the Grassmannian $\Gr(r+1,H^0(L))$ in these cases. Indeed, if we can apply Theorem \ref{thm:inj_mor_moduli}, then, we have an injective morphism $$\Phi:U\to \cM_{H}^s(r,L,L^2)\qquad W\mapsto E_W$$ where $U$ is a dense open subset of $\Gr(r+1,H^0(L))$.
\noindent 

Note that, by Kodaira vanishing, we have
$h^0(L) = \chi(L) =  \chi(\cO_S)  + \frac{1}{2}L^2,$
so we obtain
\begin{equation}
	\label{EQ:DimGr for K=0num}
	\dim(\Gr(r+1,H^0(L)))= (r+1)\left(\frac{1}{2} L^2+\chi(\cO_S)-r-1 \right).
\end{equation}

% Assume now that $S$ is not hyperelliptic  with $K_S$ of order $3$. We claim that, under this assumption, $\cM^s_H(r,L,L^2)$ is not empty and it is smooth of the expected dimension. In order to prove this, as recalled in Section \ref{SEC:preliminary results}, it is enough to show that $\Ext^2(E,E)_0=0$ for all $[E]\in \cM^s_H(r,L,L^2)$. For such $[E]$, recall that
% \begin{align*}
% 	\Ext^2(E,E)_0  &= \ker \left(h^2(tr) \colon \Ext^2(E,E) \to H^2(\cO_S)\right)\simeq \\
% 	&\simeq \ker \left(h^2(tr) \colon \Hom(E,E\otimes K_S)^* \to H^0(K_S)^*\right),
% \end{align*}
% where the latter isomorphism follows from Serre duality.
% \smallskip

% \begin{itemize}
%     \item If $K_S$ is trivial, we have $\Ext^2(E,E)_0\simeq \Hom(E,E)_0^*=0$ as $E$ is simple (see also \cite[page 168]{HL}. 
%     \item If $K_S$ is not trivial, then $H^2(\cO_S)\simeq H^0(K_S)^*=0$ by Serre duality. Then, $\Ext^2(E,E)_0\simeq \Ext^2(E,E)\simeq \Hom(E,E\otimes K_S)^*$. Now, both $E$ and $E\otimes K_S$ are $H$-stable vector bundles, with the same rank and the same $H$-slope, we have that $\Hom(E,E\otimes K_S)$ is trivial unless $E\simeq (E\otimes K_S)$. If this happens, then $c_1(E)=c_1(E)+rK_S$ so, $K_S$ is a torsion line bundle whose order divides $r$. It is well known that the possible orders for $K_S$ when $K_S\neq \cO_S$ are $2,3,4$ and $6$, with $3$ occurring only when $S$ is hyperelliptic (see \cite[page 188]{BPV}). This case has been excluded by assumption, so $r$ needs to be even. On the other hand, this cannot happen by Theorem \ref{THM:MainTHM2Ks for KodDim0}. 
% \end{itemize}

We recall (see  Section \ref{SEC:preliminary results}) that the moduli space  $\cM^s_H(r,L,L^2)$ is not empty and it is smooth of the expected dimension if $\Ext^2(E,E)_0=0$ for all $[E]\in \cM^s_H(r,L,L^2)$, where  
\begin{align*}
	\Ext^2(E,E)_0  &= \ker \left(h^2(tr) \colon \Ext^2(E,E) \to H^2(\cO_S)\right)\simeq \\
	&\simeq \ker \left(h^2(tr) \colon \Hom(E,E\otimes K_S)^* \to H^0(K_S)^*\right),
\end{align*}
and the latter isomorphism follows from Serre duality. We end up in one of the two possible cases:
\smallskip

\begin{itemize}
    \item If $K_S$ is trivial, we have $\Ext^2(E,E)_0\simeq \Hom(E,E)_0^*=0$ as $E$ is simple (see also \cite[page 168]{HL}). 
    \item If $K_S$ is not trivial, then $H^2(\cO_S)\simeq H^0(K_S)^*=0$ by Serre duality. Then, $\Ext^2(E,E)_0\simeq \Ext^2(E,E)\simeq \Hom(E,E\otimes K_S)^*$. Now, both $E$ and $E\otimes K_S$ are $H$-stable vector bundles, with the same rank and the same $H$-slope: we have that $\Hom(E,E\otimes K_S)$ is trivial unless $E\simeq (E\otimes K_S)$. If this happens, then $c_1(E)=c_1(E)+rK_S$ so, $K_S$ is a torsion line bundle whose order divides $r$. It is well known that the possible orders for $K_S$ when $K_S\neq \cO_S$ are $2,3,4$ and $6$, with the last three cases occurring only when $S$ is hyperelliptic (see \cite[page 188]{BPV}).
\end{itemize}

Following the above reasoning, we can conclude that if $S$ is a $K3$ or abelian surface then we have $\Ext^2(E,E)_0=0$, for any $[E] \in \cM^s_H(r,L,L^2)$. So the moduli space is smooth and its dimension, given by Equation \eqref{EQ:Edim}, is the following:
\begin{equation}
	\label{EQ:EDimM for K=0num}
	\dim(\cM^s_H(r,L,L^2))= (r+1)(L^2-(r-1)\chi(\cO_S)).
\end{equation}

Notice, in particular, that 
$$\dim(\Gr(r+1,H^0(L)))\leq \frac{1}{2}\dim(\cM^s_H(r,L,L^2))$$
with equality if and only if $\chi(\cO_S)=2$, i.e. if and only if $S$ is a $K3$ surface.
\begin{theorem}
	\label{THm:lagrangian}
	Let $S$ be a $K3$ surface and let $H$ be an ample primitive line bundle. If we choose $(r,m)$ and $L$ as in Theorem \ref{THM:MainTHM2Ks for KodDim0} then, whenever $\cM_H(r,L,L^2)$ is a smooth irreducible symplectic variety, 
	the closure of $\Ima(\Phi)$ in $\cM_H(r,L,L^2)$ is a (possibly singular) Lagrangian subvariety.
\end{theorem}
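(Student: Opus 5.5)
We need two things: the closure $Z$ of $\Ima(\Phi)$ has dimension $\frac12\dim\cM_H(r,L,L^2)$, and the symplectic form $\sigma$ restricts to zero on the smooth locus of $Z$. Isotropy on a dense open subset suffices, since a closed form vanishing on a dense open subset of the smooth locus vanishes on all of it.

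\emph{Dimension.} Since $S$ is a K3 surface, $\chi(\cO_S)=2$. Equations \eqref{EQ:DimGr for K=0num} and \eqref{EQ:EDimM for K=0num} give $\dim\Gr(r+1,H^0(L))=(r+1)(\tfrac12L^2+1-r)$ and $\dim\cM=(r+1)(L^2-2r+2)$, so the former is exactly half the latter. By Theorem~\ref{thm:inj_mor_moduli}, $\Phi|_U$ is injective, so $\dim Z=\dim\Gr(r+1,H^0(L))=\tfrac12\dim\cM$. By generic smoothness, $\Phi|_U$ is moreover an immersion on a dense open $U'\subset U$; alternatively one can use the left inverse $d$ of Proposition~\ref{prop: stableEw}(c), which gives injectivity of the differential wherever $d$ is regular.

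\emph{Isotropy.} Recall Mukai's form: on $T_{[E]}\cM=\Ext^1(E,E)$ (here $\Ext^1(E,E)_0=\Ext^1(E,E)$ because $H^1(\cO_S)=0$), it is
\[
\sigma(a,b)=\mathrm{tr}(a\circ b)\in \Ext^2(\cO_S,\cO_S)=H^2(\cO_S)\simeq\bC .
\]
For $W\in U'$, the tangent vectors to $Z$ at $[E_W]$ are the Kodaira–Spencer images of deformations $W_t$ of $W$. Each such deformation keeps the shape of the resolution $0\to L^*\to W_t^*\otimes\cO_S\to E_{W_t}\to 0$ of \eqref{dualevW}, with $L^*$ fixed and the trivial bundle only changing its map. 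The plan is to show that every such class $a\in\Ext^1(E,E)$ comes from $\Hom(L^*,W^*\otimes\cO_S)$ modulo $\End$'s, i.e.\ that $a$ lies in the image of a connecting map
\[
\delta\colon \Hom(L^*,E)\longrightarrow \Ext^1(E,E),
\]
obtained by applying $\Hom(-,E)$ to \eqref{dualevW}. Concretely, $a=\delta(\psi)$ is the composite of the extension class $e\in\Ext^1(E,L^*)$ of \eqref{dualevW} with a map $\psi\colon L^*\to E$, namely $a=\psi\circ e$. Then for $a=\psi\circ e$ and $b=\psi'\circ e$ we get $a\circ b=\psi\circ e\circ\psi'\circ e$. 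This contains the factor $e\circ\psi'\in\Ext^1(L^*,L^*)=H^1(\cO_S)=0$, so $a\circ b=0$ and hence $\sigma(a,b)=0$.

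\emph{Main obstacle.} The key step, where I expect the difficulty, is justifying that the tangent directions along $Z$ lie in $\Ima(\delta)$. I would verify this by a first-order computation. A deformation of the inclusion $L^*\hookrightarrow W^*\otimes\cO_S$ is a map $\phi\colon L^*\to W^*\otimes\cO_S$, and the induced first-order deformation of the cokernel $E$ has class obtained by composing $\phi$ with the projection $W^*\otimes\cO_S\to E$ and then with $e$. This is standard deformation theory of cokernels. The auxiliary vanishing needed is only $H^1(\cO_S)=0$, which holds on a K3. Finally, since $\Ima(\Phi)$ is isotropic of half dimension on a dense open subset, $Z$ is Lagrangian, possibly singular along its boundary.
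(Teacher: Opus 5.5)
Your proof is correct, and it reaches isotropy by a genuinely different route from the paper.

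\textbf{The paper's route.} It does the same dimension count: $\chi(\cO_S)=2$ gives $\dim\cM_H(r,L,L^2)=2\dim\Gr(r+1,H^0(L))$. It then gets isotropy globally in one line. Since $Y=\overline{\Ima(\Phi)}$ is birational to a Grassmannian, it is rational. Hence every holomorphic $2$-form on the moduli space restricts to zero on $Y_{reg}$, because a smooth projective model of $Y$ has $h^{2,0}=0$.

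\textbf{Your route.} You work infinitesimally with Mukai's form. Tangent vectors to $Z$ at general points are Kodaira--Spencer classes $\delta(\pi\circ\phi)$ of deformations of the map $L^*\to W^*\otimes\cO_S$ with $L^*$ fixed. The Yoneda product of two such classes factors through $\Ext^1(L^*,L^*)=H^1(\cO_S)=0$. The factorization $\psi[2]\circ e[1]\circ\psi'[1]\circ e$ is right, so the product vanishes.

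\textbf{What the paper's argument buys.} It is shorter. It kills every holomorphic $2$-form at once, so it never needs to identify the symplectic structure on $\cM_H(r,L,L^2)$ with Mukai's form on the stable locus; you use that identification implicitly. Its price is that it relies on the full birationality, hence rationality, of $Y$.

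\textbf{What your argument buys.} It uses only the shape of the resolution \eqref{dualevW} with $L$ fixed, together with $H^1(\cO_S)=0$, and needs no rationality. It also gives finer information. The whole subspace $\Ima(\delta)\subset\Ext^1(E_W,E_W)$ is isotropic and contains the half-dimensional space $T_{[E_W]}Z$. Therefore $T_{[E_W]}Z=\Ima(\delta)$ at general points.

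\textbf{What your argument costs, and a side remark.} It depends on the standard identification of $d\Phi$ with the cokernel-deformation map. You should state this explicitly via the family $\cE$ over $U\times X$ built in the proof that $\Phi|_U$ is a morphism. Your generic-smoothness argument already gives the immersion on a dense open set. The alternative via the left inverse $d$ is unnecessary, and it would require first showing that $d$ is a morphism.
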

% \begin{theorem}
	% \label{THm:lagrangian}
	% Let $S$ be a $K3$ surface with $\rho(S)=1$ and let $H$ be an ample primitive line bundle with $H^2=4h$, $h\geq 1$. If we choose $(r,m)$ and $L$ as in Theorem \ref{THM:MainTHM2Ks for SGT} then, 
	% \begin{enumerate}[(a)]
		%     \item $\cM^s_H(r,L,L^2) = \cM_H(r,L,L^2)$, which is a smooth irreducible symplectic variety;
		%     \item the closure of $\Ima(\Phi)$ in $\cM_H(r,L,L^2)$ is a (possibly singular) Lagrangian subvariety.
		% \end{enumerate}
	% \end{theorem}

\begin{proof}
	Consider the Mukai vector $v=(r,L,r-L^2/2)=(r, mH, r-m^2\frac{H^2}{2})$ associated to our construction (see \cite{HL}). Then, with the notation in \cite{PR23}, we have $\cM_H(r,L,L^2)=\cM_v(S,H)$ and $\cM_H^s(r,L,L^2)=\cM_v^s(S,H)$.
	It is enough to recall that $Y=\overline{\Ima(\Phi)}$ is birational to $\Gr(r+1,H^0(L))$ and thus it is rational. Hence, (non-zero) holomorphic $2$-forms on $\cM_v(S,H)$ restrict to a trivial $2$-form on $Y_{reg}$. On the other hand, when $S$ is a $K3$ one has $\chi(\cO_S)=2$ and  we have
$$\dim(\cM_v(S,H))=\dim(\cM^s_H(r,L,L^2))=2\dim(\Gr(r+1,H^0(L)))=2\dim(Y)$$ so $Y$ is a Lagrangian subvariety of $\cM_v(S,H)$. 
\end{proof}

\begin{remark}
    \label{REM:Lagrangian}
	If $S$ has Picard number one, then $ \cM_H(r,L,L^2)$ is a smooth irreducible symplectic variety and Theorem \ref{THm:lagrangian}  applies whenever $(r,m)$ are coprime\footnote{This actually happens for all the pairs satisfying condition $A_3(1)$, i.e. the ones given in Lemma \ref{LEM:CondA31_KSnumtrivial}. There are also pairs that satisfy condition $A_3(2)$ for which $(r,m)$ are coprime.}.  
	\smallskip
	Indeed, since $H$ is primitive, we have that the Mukai vector $v=\left(r,mH, r-m^2 \frac{H^2}{2}\right)$ is primitive too.
	% We prove that $v$ is a primitive Mukai vector. Since $H$ is primitive, it is sufficient to prove that $r$ and $m$ are coprime numbers. Assume that $h=1$, so that $r=4a+1$ and $m=3a+1$. Then 
	%     \[
	%     -3r+4m=-12a-3+12a+4=1. 
	%     \]
	%     Instead, let us suppose that $h\geq 2$, so that $r=4ah-2h-1$ and $m=(1+2h)a-h-1$. Then 
	%     \[
	%       -(1+2h)r+4hm=-(1+2h)(4ah-2h-1)+4h((1+2h)a-h-1)=1. 
	%     \]
	% In both cases, $r$ and $m$ are always coprime.    
	Since $\rho(S)=1$, then $H$ is both $v$-generic and general with respect to $v$ (see \cite[Lemma 2.9]{PR23}). Then, by \cite[Theorem 1.10]{PR23} (and see also \cite[Theorem 4.4]{KLS2006}), $\cM_v(S,H)$ is an irreducible simplectic variety. The smoothness follows from the fact that $v$ is primitive and $H$ is general with respect to $v$ (see either the remark following Theorem 1.10 in \cite{PR23} or \cite[Lemma 2]{Saw16}).
\end{remark}

A different explicit description of the Lagrangian subvariety of the moduli space  of stable vector bundles on a smooth regular algebraic surface with  $p_g >0$  can be found in  \cite{Y93}.

% -------------------------------------------------------------
% -------------------------------------------------------------

\subsection{Del Pezzo surfaces}
Let us assume that $S$ is a del Pezzo surface and let $e$ be its degree (i.e. $e=K_S^2$). We recall (see \cite{Dem77}, for example) that, although $-K_S$ is ample, it is not very ample when $e\leq 2$. On the other hand, $-2K_S$ is always globally generated, and it is very ample unless $e=1$, whereas $-3K_S$ is always very ample.
We set
$$L= -mK_S  \quad H = -3K_S$$
with $m\geq 2$ so that $L$ is nef, big and globally generated, and there exists a smooth irreducible curve $C$ in the linear system $\vert H \vert$ (i.e. assumption $A_1$ holds). Moreover, as $L-H=(m-3)K_S$, it follows that $H^1(L-H)=0$ (by Kodaira vanishing for $m\neq 3$ and since $S$ is regular, for the case $m=3$), hence the restriction map of global section $\rho \colon H^0(L) \to H^0(L|_C)$ is surjective (so that assumption $A_2$ holds). 
\smallskip

We would like to find pairs $(r,m)$ such that assumption $A_3$ holds too for some integer $r \geq 2$. For this class of surfaces, for brevity, we focus only on assumption $A_3(1)$. Analogous results hold for the case $A_3(2)$ and can be easily obtained.
\smallskip

As $g(C)=1+3K_S^2=1+3e\geq 4,$ the condition $\deg(L|_C)=rg+1$ is equivalent to $3me=r(1+3e)+1$. Then, one has necessarily 
\begin{equation}
	\label{EQ:Valuesof r and m for delPezzo}
	r=3ae-1\quad \mbox{ and }\quad m=a(3e+1)-1, \qquad \mbox{ with } a\geq 1.
\end{equation}
Notice that, under our assumptions, we have no solution when $m=2$. So we have the following result: 

\begin{theorem}
	\label{THM:MainTHM for FanoSurf}
	Let $S$ be del Pezzo surface of degree $e$. For any natural number $a\geq 1$ we consider a pair $(r,m)$ as in \eqref{EQ:Valuesof r and m for delPezzo}. Then, if $L=-mK_S$ and $H=-3K_S$, to the triple $(S,L,H,r)$ is admissible and there exists a subvariety of $\cM_{H}^s(r,L,L^2)$ birational to $\Gr(r+1,H^0(L))$. 
\end{theorem}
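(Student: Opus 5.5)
The plan is to check that $(S,-mK_S,-3K_S,r)$ satisfies $A_1$, $A_2$ and $A_3(1)$ of Definition \ref{DEF:AdmissibleTriple}, and then to invoke Theorem \ref{thm:inj_mor_moduli}. By Lemma \ref{lemmatecnico2}, the resulting Chern data are $\underline{c}=(L,L^2)$, so the moduli space in the statement is $\cM^s_H(r,L,L^2)$.

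\emph{Step 1: $A_1$.} $H=-3K_S$ is very ample on every del Pezzo surface, so by Bertini a general $C\in|H|$ is smooth and irreducible. Adjunction gives
$$2g-2=C\cdot(C+K_S)=(-3K_S)\cdot(-2K_S)=6e,$$
so $g=1+3e\geq 4\geq 2$.

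\emph{Step 2: $A_2$.} First, $m\geq 2$ always holds, since $a\geq1$ and $e\geq 1$ give $m=a(3e+1)-1\geq 3e\geq 3$. Then $L=-mK_S$ is ample, hence big and nef, and it is globally generated because $-2K_S$ and $-3K_S$ are and $m\geq 2$ (write $m=2i+3j$). Since $\dim\varphi_L(X)\leq 2$ and $r=3ae-1\geq 2$, the bound $r\geq\dim\varphi_L(X)$ holds. For the surjectivity of $\rho$ I would use the sequence
$$0\to L-H\to L\to L|_C\to 0,$$
where $L-H=(m-3)K_S$. It remains to show $H^1((m-3)K_S)=0$:
\begin{itemize}
\item if $m=3$, this is $H^1(\cO_S)=0$, because $S$ is rational and hence $q=0$;
\item if $m>3$, Serre duality gives $h^1((m-3)K_S)=h^1(-(m-4)K_S)$, and $-(m-4)K_S=K_S+(-(m-3)K_S)$ with $-(m-3)K_S$ ample, so Kodaira vanishing kills it (the case $m=4$ being again $H^1(\cO_S)=0$).
\end{itemize}

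\emph{Step 3: $A_3(1)$.} This is pure arithmetic: we need $d=\deg(L|_C)=(-mK_S)\cdot(-3K_S)=3me$ to equal $rg+1=r(1+3e)+1$. Substituting $r=3ae-1$ and $m=a(3e+1)-1$, I would expand both sides:
$$3e\bigl(a(3e+1)-1\bigr)=9ae^2+3ae-3e,\qquad (3ae-1)(3e+1)+1=9ae^2+3ae-3e.$$
They agree, so $A_3(1)$ holds.

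Admissibility then follows, and Theorem \ref{thm:inj_mor_moduli} produces the subvariety birational to $\Gr(r+1,H^0(L))$. The only step that is not pure arithmetic is the surjectivity of $\rho$, specifically the vanishing $H^1((m-3)K_S)=0$ across the cases $m=3$ and $m>3$ (including $m=4$) handled in Step 2; none of this is a serious obstacle.
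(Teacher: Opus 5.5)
Your proposal is correct and follows the paper's argument essentially verbatim. It uses very ampleness of $-3K_S$ with Bertini and adjunction ($g=1+3e$) for $A_1$, the vanishing $H^1((m-3)K_S)=0$ (regularity of $S$ for $m=3$, Kodaira vanishing otherwise) for the surjectivity of $\rho$ in $A_2$, and the identity $3me=r(1+3e)+1$ for $A_3(1)$, before invoking Theorem \ref{thm:inj_mor_moduli}. The only difference is cosmetic: you check that the given $(r,m)$ satisfy $A_3(1)$ rather than deriving that they must have this form, and that is all the statement requires.
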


% -------------------------------------------------------------
% -------------------------------------------------------------

\subsection{Elliptic surfaces}
Here, we present two classes of examples of admissible data for elliptic surfaces, focusing specifically on assumption $A_3(1)$. These surfaces are of \textit{product-quotient} type, a class that has been extensively investigated in the literature (see, e.g., the recent works \cite{Fede1}, \cite{Fede2}, and \cite{AleFraGle}).
\smallskip

First of all, let us consider a surface $S=E\times F$ where $E$ is an elliptic curve and $F$ is a curve of genus $g\geq 2$ so that $S$ is an elliptic surface. We write $K_F$ to mean any canonical divisor on $F$. Hence, $K_F$ is globally generated and ample and the same holds for the divisor $2p$ on $E$, where $p$ is any point on $E$. Then, if we set $H=2(p\times F)+E\times K_F$, we have that $H$ is globally generated and ample so that assumption $A_1$ holds by Bertini's Theorem. It is easy to see that any smooth curve $C$ in $|H|$ has genus $g(C)=6g-5$.
\smallskip

If we set $L=mH$ with $m\geq 3$ we have $L-H\equiv K_S+D$ with $D$ ample, so $H^1(L-H)=0$ by Kodaira vanishing: assumption $A_2$ holds.

Reasoning as in the other cases, after some computation, one proves the following:

\begin{theorem}
	\label{THM:MainTHM2Ks for EllFib}
	If $S=E\times F$ with $E$ an elliptic curve and $g=g(F)\geq 2$, set $H=2(p\times F)+E\times K_F$. For any integer $a\geq (7g-3g^2)/(6g-5)$ we consider a pair $(r,m)$ such that 
	\[
	\left.\begin{aligned}
		r &= 4g^2-10g+5+8(g-1)a      &\qquad \mbox{and}\qquad& m = 3g^2-7g+3+a(6g-5).
	\end{aligned}\right.
	\]
	Then, if we set $L= mH$, then $(S,L,H,r)$ is admissible and there exists a subvariety of $\cM_{H}^s(r,L,L^2)$ birational to $\Gr(r+1,H^0(L))$. 
\end{theorem}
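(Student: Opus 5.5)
The plan is to check that $(S,L,H,r)$ satisfies $A_1$, $A_2$ and $A_3(1)$ of Definition \ref{DEF:AdmissibleTriple}, and then apply Theorem \ref{thm:inj_mor_moduli}. Note that $c_2=c_1(L)^2=L^2$ by Lemma \ref{lemmatecnico2}.

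First I compute the intersection numbers on $S=E\times F$. Write $f_1=p\times F$ and $f_2=E\times\{q\}$. Then $f_1^2=f_2^2=0$ and $f_1\cdot f_2=1$. Since $E\times K_F\equiv(2g-2)f_2$, we have $H\equiv 2f_1+(2g-2)f_2$, so
$$H^2=8(g-1).$$
Since $K_E=0$, we get $K_S=E\times K_F$, hence $K_S\cdot H=4(g-1)$. Adjunction then gives $2g(C)-2=12(g-1)$, which confirms $g(C)=6g-5\geq 7$.

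Assumption $A_1$ holds as explained before the statement: $H$ is ample and globally generated, so Bertini gives a smooth irreducible $C\in|H|$ with $g(C)\geq 2$.

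For $A_2$, I need the following.
\begin{itemize}
\item $L=mH$ is ample and globally generated, hence big, nef and globally generated.
\item $r\geq 2=\dim\varphi_L(S)$. Since $L$ is ample, $\varphi_L$ is finite, so this amounts to $r\geq 2$.
\item $\rho$ is surjective. This follows from $H^1(L-H)=0$, obtained by Kodaira vanishing because $L-H\equiv K_S+D$ with $D$ ample when $m\geq 3$.
\end{itemize}
So the key points are $m\geq 3$ and $r\geq 2$.

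The heart of the proof is $A_3(1)$, namely $d=\deg(L|_C)=mH^2=rg(C)+1$, i.e.
$$8(g-1)m=(6g-5)r+1.$$
I plug in the given formulas. The terms in $a$ contribute $8(g-1)(6g-5)a$ on both sides, so they cancel. It remains to verify the polynomial identity
$$(4g^2-10g+5)(6g-5)+1=8(g-1)(3g^2-7g+3).$$
Both sides expand to $24g^3-80g^2+80g-24$. Conversely, the formulas are exactly the integer solutions of the linear Diophantine equation: $\gcd(8(g-1),6g-5)=1$ because $4(6g-5)-3\cdot 8(g-1)=4$ and $6g-5$ is odd. So no further constraint is hidden.

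The step I expect to be most delicate is checking that the lower bound $a\geq (7g-3g^2)/(6g-5)$ is exactly what makes $m\geq 3$ and $r\geq 2$.
\begin{itemize}
\item $m\geq 3$ is equivalent to $a(6g-5)\geq 7g-3g^2$, which is precisely the hypothesis.
\item For $r$, I express $r$ through $m$ via $r=(8(g-1)m-1)/(6g-5)$. Then $m\geq 3$ gives $r\geq(24g-25)/(6g-5)>3$, so $r\geq 2$ holds with room to spare.
\end{itemize}
Finally, I observe that $A_3(1)$ forces $d=rg(C)+1\geq 2g(C)+1$, as already used in Lemma \ref{lemmatecnico1}. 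Hence $(S,L,H,r)$ is admissible, and Theorem \ref{thm:inj_mor_moduli} yields the subvariety of $\cM_H^s(r,L,L^2)$ birational to $\Gr(r+1,H^0(L))$.
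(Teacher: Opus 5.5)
Your proposal is correct and follows the paper's route. You get $A_1$ from Bertini and $A_2$ from Kodaira vanishing of $H^1((m-1)H)$ once $m\geq 3$ (here $(m-1)H-K_S\equiv 2(m-1)f_1+(2g-2)(m-2)f_2$ is ample), then $A_3(1)$ from the identity $8(g-1)m=(6g-5)r+1$, and finally apply Theorem \ref{thm:inj_mor_moduli}. The paper only says ``after some computation''; your intersection numbers, the polynomial identity, and the observation that the bound on $a$ is exactly $m\geq 3$ (which then forces $r\geq 4$) correctly supply those omitted details.
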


For the second class of examples, we slightly modify the previous one. Let us consider a finite group $G$ acting faithfully both on an elliptic curve $E$ and a smooth curve $F$ of genus $g\geq 2$, such that $E/G\cong \bP^1$ and $F/G$ is an elliptic curve. Assume that the action of the diagonal subgroup $\Delta\leq G\times G$ on the product $E\times F$ is free, so that the quotient $S:=(E\times F)/\Delta$ is smooth. Surfaces of this type are said to be \textit{isogenous to a product of curves} (see \cite[Def. 3.1]{Cat00}). We have the following hexagonal commutative diagram 
\[\begin{tikzcd}
	& {E\times F} \\
	E & S & F \\
	{\mathbb P^1} && {F/G} \\
	& {\mathbb P^1\times F/G}
	\arrow["{p_1}"', from=1-2, to=2-1]
	\arrow["{\lambda_{12}}", from=1-2, to=2-2]
	\arrow["{p_2}", from=1-2, to=2-3]
	\arrow["{\lambda_1}"', from=2-1, to=3-1]
	\arrow["{f_1}"', from=2-2, to=3-1]
	\arrow["{f_2}", from=2-2, to=3-3]
	\arrow["\lambda", from=2-2, to=4-2]
	\arrow["{\lambda_2}", from=2-3, to=3-3]
	\arrow[bend left=25, "{\eta_1}", from=4-2, to=3-1]
	\arrow[bend right=25, "{\eta_2}"', from=4-2, to=3-3]
\end{tikzcd}\]
involving projections from products (namely $p_1,p_2, \eta_1$ and $\eta_2$), quotients by the various actions of $G$ (namely $\lambda_1,\lambda_2$ and $\lambda_{12}$) and the natural fibrations $f_1$ and $f_2$ induced on $S$. %{\blue PER FEDE: $\lambda$ non è un quoziente in generale, giusto? era solo per descrivere alla buona il diagramma. Fai un check se non abbiamo scritto vaccate.}
%{\red Risposta: No la mappa $\lambda$ non è in generale un quoziente per l'azione di un gruppo, ma solamnete una mappa finita di grado $\vert G\times G/\Delta\vert =\vert G\vert$. Il morfismo diventa di Galois se e solo se la diagonale $\Delta$ è normale in $G\times G$, ovvero se e solo se $G$ è abeliano. Ad ogni modo, $\lambda$ è sempre ben definita e il diagramma di sopra esiste sempre.}
%{\color{blue} FIL: Perfetto, mi torna tutto. Mi sembra che le cose, così come sono, vadano bene. Togliamo la parte colorata e lo dichiariamo a posto così, ok?}
%{\red vai togli pure :D}
\smallskip

Consider $p\in \bP^1$ and $q\in F/G$ and the fibers $F_1:=f_1^*(p)$ and $F_2:=f_2^*(q)$. Clearly, the choice of the two points doesn't matter if we are only interested in the numerical class of $F_1$ and $F_2$. We observe that $S$ has Kodaira dimension one as $\lambda_{12}\colon E\times F\to S$ is a finite étale morphism of smooth surfaces. The numerical class of a canonical divisor of $S$ is 
\[K_S\equiv_{num} \frac{2g-2}{\vert G\vert}F_2.
\]
We observe that any irreducible curve $C$ of $S$ such that $C\cdot F_1=0$ is contained in a fiber of $f_1$; otherwise, we could always pick up a point of $C$ such that the fiber of that point and $C$ intersect positively. A similar argument holds when $C\cdot F_2=0$. 

Let us consider a divisor $H:=F_1+2F_2$. Since $F_1$ and $F_2$ are nef divisors (as $f_1$ and $f_2$ are fibrations), then $H$ is ample by the previous argument. Indeed,  $C\cdot H\geq 0$ with equality if and only if both $C\cdot F_1$ and $C\cdot F_2$ are zero, a contradiction. The divisor $H$ is also globally generated as $F_1$ and $2F_2$ are globally generated as pullback, via a dominant morphism $\lambda$, of the globally generated divisor $\{p\}\times (F/G)+2\bP^1\times \{q\}$. In particular, there exists a smooth curve $C\in |H|$ and $A_1$ holds. It is easy to see that the genus of any smooth curve $C\in \vert H\vert$ is $g(C)=2\vert G\vert+g$. 

If we set $L:=mH$ with $m\geq 1+(g-1)/|G|$, then $L-H-K_S$ is ample, so that $H^1(L-H)$ is zero by Kodaira vanishing and $A_2$ hold.  
\begin{theorem}
	\label{THM:MainTHM2Ks for EllFib2}
	Let $S=(E\times F)/G$ isogenous to a product of a an elliptic curve $E$ and a curve $F$ with $g=g(F)\geq 2$. Set $H=F_1+2F_2$ as above. For any integer $a$, consider the pair $(r,m)$ with 
	\[
	\left.\begin{aligned}
		r &= \frac{2(2a+1)|G|-1}{g}      &\qquad \mbox{and}\qquad& m = \frac{(2a+1)(2|G|+g)-1}{2g}.
	\end{aligned}\right.
	\]
	Then, if $r$ and $m$ are integers with $r\geq 2$ and $m \geq 2$, and if we set $L= mH$, then $(S,L,H,r)$ is admissible so there exists a subvariety of $\cM_{H}^s(r,L,L^2)$ birational to $\Gr(r+1,H^0(L))$. 
\end{theorem}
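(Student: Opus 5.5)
The plan is to verify that $(S,L,H,r)$ is admissible in the sense of Definition \ref{DEF:AdmissibleTriple}, using $A_3(1)$ as the numerical condition, and then apply Theorem \ref{thm:inj_mor_moduli} directly.

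\textbf{Conditions $A_1$ and $A_2$.} Condition $A_1$ was already checked in the discussion before the statement: $H$ is ample and globally generated, so Bertini gives a smooth irreducible $C\in|H|$ of genus $g(C)=2|G|+g\geq 2$. For $A_2$, $L=mH$ is ample and globally generated, hence big and nef. Since $\dim S=2$ and $r\geq 2$, the inequality $r\geq\dim\varphi_L(S)$ holds. Surjectivity of $\rho$ follows from $H^1(L-H)=H^1((m-1)H)=0$, which the text derives from Kodaira vanishing once $m\geq 1+(g-1)/|G|$.

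\textbf{Condition $A_3(1)$.} First compute the intersection numbers. $F_1$ and $F_2$ are pullbacks under $\lambda$, of degree $|G|$, of $\{p\}\times F/G$ and $\mathbb P^1\times\{q\}$, which meet transversally in one point. Hence $F_1\cdot F_2=|G|$ and $F_i^2=0$, so $H^2=4|G|$. This is consistent with adjunction, since $K_S\cdot H=2g-2$ gives $g(C)=2|G|+g$. Therefore $d=\deg L|_C=mH^2=4m|G|$, and $A_3(1)$ reads
$$4m|G|=r(2|G|+g)+1.$$
By the formula for $r$ we have $rg=2(2a+1)|G|-1$, so $r(2|G|+g)+1=2|G|(r+2a+1)$. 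The condition thus becomes $2m=r+2a+1$. Multiplying by $g$, this is exactly $2mg=(2a+1)(2|G|+g)-1$, which is the given formula for $m$. So $A_3(1)$ holds identically whenever $r,m$ are integers.

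\textbf{Main obstacle: the bound on $m$.} The hardest step is making sure the bound $m\geq 1+(g-1)/|G|$ needed for $A_2$ is actually implied by the hypotheses. From $A_3(1)$ and $r\geq 2$ one only gets $m-1\geq (2g+1)/(4|G|)$, which is weaker in general.

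I would try two routes for this step.
\begin{itemize}
\item[(i)] Use the integrality of $r$, i.e. $g\mid 2(2a+1)|G|-1$, together with the constraint $r\geq 2$, to show that $a$ is forced to be large enough to give the bound.
\item[(ii)] Failing that, compute $H^1(S,(m-1)H)$ directly as the $G$-invariant part of $H^1(E\times F,\lambda_{12}^*((m-1)H))$ via Künneth. The pullback is $A\boxtimes B$ with $\deg A>0$ on $E$ and $\deg B=2(m-1)|G|$ on $F$, so the vanishing reduces to controlling the invariants in $H^0(A)\otimes H^1(B)$. If neither route works in full generality, I would add the inequality $m\geq 1+(g-1)/|G|$ explicitly as a hypothesis, as is implicit in the discussion preceding the theorem.
\end{itemize}

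With $A_1$, $A_2$ and $A_3(1)$ established, Theorem \ref{thm:inj_mor_moduli} yields the subvariety of $\cM^s_H(r,L,L^2)$ birational to $\Gr(r+1,H^0(L))$.
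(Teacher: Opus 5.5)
Your plan is the paper's own. The paper's proof is the discussion just before the statement: $A_1$ by Bertini, $A_2$ by Kodaira vanishing for $L-H-K_S=(m-1)H-K_S$, and $A_3(1)$ from $H^2=4|G|$ and $g(C)=2|G|+g$. Your reduction of $A_3(1)$ to $2m=r+2a+1$ is correct.

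The obstacle you isolate is real, and it is a defect of the statement rather than of your argument. The $A_2$ step needs $m>1+(g-1)/|G|$. It must be strict: at equality $(m-1)H-K_S\equiv (m-1)F_1$ is nef but not big. This bound does not follow from $r,m\geq 2$. Route (i) settles most cases. Since $rg=2(2a+1)|G|-1$ is odd, $r$ is odd, so $r\geq 3$. Then $4m|G|=r(2|G|+g)+1$ gives
\[
m-1-\frac{g-1}{|G|}=\frac{r-2}{2}+\frac{(r-4)g+5}{4|G|},
\]
which is positive for $r\geq 5$. For $r=3$ it is positive if and only if $(a-1)|G|<4$.

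In the remaining cases neither (i) nor (ii) can succeed, because $A_2$ genuinely fails. Take $T\cong\mathbb{Z}_2\times\mathbb{Z}_4$ a group of translations of $E$ and $G=T\rtimes\langle -1\rangle$, so $|G|=16$ and $E/G\cong\mathbb{P}^1$. Let $F$ be the $G$-cover of an elliptic curve with generating vector $\alpha=-1$, $\beta$ the translation by $(0,1)$, twelve branch points with monodromy $(1,0)$ and one with monodromy $(0,2)$. Then:
\begin{itemize}
\item $g=53$;
\item the only elements with fixed points on $F$ are translations, which act freely on $E$, so the diagonal action is free;
\item $a=2$ gives $(r,m)=(3,4)$.
\end{itemize}
Put $A=\lambda_1^*\mathcal{O}_{\mathbb{P}^1}(3)$ and $B=\lambda_2^*\mathcal{O}_{F/G}(6q)$. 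Your Künneth reduction gives $H^1(S,3H)=(H^0(E,A)\otimes H^1(F,B))^G$. Also $H^1(S,4H)=0$, since $\deg\lambda_2^*\mathcal{O}(8q)=128>2g-2$.

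Now take the character $\chi$ of $T$ with $\chi(1,0)=-1$ and $\chi(0,1)=i$. Push $\mathcal{O}_F$ forward to the elliptic curve $F/T$, which is an \'etale double cover of $F/G$. Its $\chi$-eigensheaf has degree $-13$: each of the $26$ branch points, all with order-$2$ monodromy on which $\chi=-1$, contributes $-\tfrac12$. Twisted by the degree-$12$ pullback of $\mathcal{O}(6q)$, it has $h^1=1$. So $H^1(F,B)$ contains the irreducible self-dual representation $\operatorname{Ind}_T^G\chi$. On the other side, every character of $T$ occurs six times in $H^0(E,A)$, because $E\to E/T$ is \'etale and $A$ is pulled back from a degree-$6$ bundle on $E/T$.

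Hence $h^1(S,3H)\geq 6$. Since $H^1(S,4H)=0$, the cokernel of $\rho$ is $H^1(S,3H)$, so $\rho$ is not surjective for any $C\in|H|$, and $(S,4H,H,3)$ is not admissible. (Route (ii) can sometimes give the vanishing beyond the Kodaira range, but not in general.)

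So your fallback is necessary, not optional. A hypothesis such as $m>1+(g-1)/|G|$, or directly $H^1(S,(m-1)H)=0$, must be added to the statement. With it, your proof is complete and coincides with the paper's.
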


\end{document}